\chardef\bslash=`\\ 
\newtheorem[{}\it]{thm}{Theorem}[section]
\newtheorem{cor}[thm]{Corollary}
\newtheorem{lem}[thm]{Lemma}
\newtheorem{prop}[thm]{Proposition}
\theoremstyle{definition}
\newtheorem{defn}{Definition}[section]
\newtheorem{rem}{Remark}[section]
\newtheorem*[{}\it]{notation}{Notation}
\newcommand{\N}{\mathbb{N}}
\newcommand{\R}{\mathbb{R}}
\newcommand{\Om}{\Omega}
\newcommand{\ric}{\mathrm{Ric}}
\renewcommand{\div}{\mathrm{div}}
\newcommand{\roll}{\mathrm{roll}}
\newcommand{\hess}{\mathrm{Hess~}}
\newcommand{\n}{\mathbf{n}}
\newcommand{\grad}{\nabla}
\newcommand{\dist}{\mathrm{dist}}
\newcommand{\seg}{\mathrm{seg}}
\newcommand{\inj}{\mathrm{inj}}
\newcommand{\cut}{\mathrm{cut}}
\newcommand{\T}{\mathrm{T}}
\newcommand{\spn}{\mathrm{span}}
\newcommand{\tr}{\mathrm{Tr}}
\newcommand{\h}{\tilde{\mathfrak{h}}}
\newcommand{\arcot}{\mathrm{arcotan}}
\newcommand{\eval}[2][\right]{\relax
  \ifx#1\right\relax \left.\fi#2#1\rvert}
\title[Eigenvalues of the Wentzel-Laplace operator]{ Wentzel-Laplace eigenvalues comparison}
\author{Aissatou M. NDIAYE}
\address{Institut de math\'ematiques\\
Université de Neuchâtel\\
Neuchâtel, Switzerland}
\email{aissatou.ndiaye@unine.ch}
\begin{document}

\begin{abstract}
In this paper we present quantitative comparisons  between the Wentzel-Laplace eigenvalues, Steklov eigenvalues and Laplacian eigenvalues on the boundary of the target manifold using Riccati comparison techniques to estimate the Hessian of the distance function from the boundary.
\end{abstract}

\maketitle
\tableofcontents

\section{Introduction}
Let $n\geqslant 2$ and $ (M, g)$ be an $n$-dimensional compact Riemannian manifold with  smooth boundary $\Gamma$. 
Let $\Delta$ and  $\Delta_\Gamma$ denote the Laplace-Beltrami operators  acting on functions  on $M$ and $\Gamma$ respectively. We define the Laplacian as the negative divergence of the gradient operator. The gradient operators on $M$ and $\Gamma$ will be denoted by $\nabla$ and $\nabla_\Gamma$ respectively and the outer normal derivative on $\Gamma$ by $\partial_\n$.  Throughout the paper we denote by $\mathrm{d}_M$ and $\mathrm{d}_\Gamma$ the (Riemannian) volume elements of $M$ and $\Gamma$.
Let $\beta\in\R_{\geqslant 0}$, we consider the Wentzel eigenvalue problem on $M$:
\begin{equation}\label{w}
\begin{cases}
\Delta u=0 \quad \text{in}~\Omega,\\
\beta \Delta_\Gamma u +\partial_\n u=\lambda u  \quad \text{on}~\Gamma.
\end{cases}
\end{equation}

Problem \eqref{w} admits a discrete sequence of eigenvalues that can be arranged as 
 \begin{equation}\label{spectrum}
 0=\lambda_{W,0}^{\beta}< \lambda_{W,1}^{\beta}\leqslant\lambda_{W,2}^{\beta}\leqslant \cdots\leqslant \lambda_{W,k}^{\beta}\leqslant \cdots \nearrow \infty.
 \end{equation}
We adopt the convention that each eigenvalue is repeated according to its multiplicity.


The eigenvalue problem of the Laplacian with Wentzel boundary has only recently been significantly investigated. There have also been new developments on the Steklov eigenvalue problem. See for example \cite{provstub,xiong2017,colbGirHas,xia2019escobars}. We adopt the philosophy of [Gal15], to interpret the Wentzel eigenvalue problem as a perturbed version (unperturbed when $\beta=0$) of the Steklov problem. This allows us to use similar methods as proposed in the recent works of Provenzano-Stubbe, Xiong and Colbois-Girouard-Hassannezhad. They use geometric properties of a well chosen distance function to bound Steklov eigenvalues. We, nevertheless, will focus on the Wentzel eigenvalues with boundary parameter $\beta>0$.

 Consider the map  $ \wedge: L^2 (\Gamma) \longrightarrow L^2 (\Omega) $  related to the  Dirichlet problem
\begin{equation}\label{harm}
\begin{cases}
\Delta u=0\quad\text{ in } \Omega,\\
u |_\Gamma= f \quad \text{ on } \Gamma,
\end{cases}
\end{equation}
which associates to any $ f \in  L^2 (\Gamma)$ its harmonic extension, that is, the unique function $u$ in $ L^2(\Omega)$ satisfying \eqref{harm}. This map is well defined from $L^2(\Gamma)$ (respectively, $H^{\frac{1}{2}}(\Gamma)$) to $L^2(\Omega)$ (respectively, $H^1(\Omega)$). 
See \cite[p. 320 Prop $1.7$]{Taylor} for more details. By $H^s(\Omega)$ and $H^s(\Gamma)$,  we denote the Sobolev spaces of order $s$ on $\Omega$ and $\Gamma$, and $u |_\Gamma\in H^\frac{1}{2}(\Gamma)$ stands for the trace of $u\in H^1(\Omega)$ at the boundary $\Gamma$. This will also be denoted by $u$,  if there is no ambiguity.

 Then the Dirichlet-to-Neumann operator is defined by 
\begin{align}
\mathrm{N_ D}: H^\frac{1}{2}(\Gamma) & \longrightarrow  H^{-\frac{1}{2}} (\Gamma)\\
 			f&\longmapsto \partial_\n (\wedge f)\nonumber.
\end{align}
Again $\partial_\n\in  H^{-\frac{1}{2}}(\Omega) $  stands for the  normal derivative at the boundary $\Gamma$ of $\Omega$ with $\n$ the normal vector pointing outwards.

For all $u\in H^{\frac{1}{2}}(\Gamma)$, we define the operator $\mathrm{B}_0= N_D$ (in the operator sense). For  $\beta>0$, we define $\mathrm{C}_\beta u \stackrel{\scriptscriptstyle\text{def}}= \beta\Delta_\Gamma u$ for all $u\in H^1(\Gamma)$ and $$\mathrm{B}_\beta \stackrel{\scriptscriptstyle\text{def}}= \mathrm{B}_0+\mathrm{C}_\beta.$$
The eigenvalues sequence $\{ \lambda_{W,k}^{\beta}\}_{k=0}^\infty $ given in \eqref{spectrum} can be interpreted as the spectrum associated to the operator $\mathrm{B}_\beta$ and  is subject to the following min-max characterisation (see e.g., \cite[Thm 1.2]{sandgren} and \cite[(2.33)]{Gal2015}).

Let $\mathfrak{V}(k) $ denote  the set of  all $ k$-dimensional  subspaces of   $ \mathfrak{V}_\beta$ which  is defined by
\begin{align}
 \mathfrak{V}_0&\stackrel{\scriptscriptstyle\text{def}}=\{(u,u_\Gamma)\in H^1(\Omega)\times H^\frac{1}{2}(\Gamma): u_\Gamma=u|_\Gamma \},\\
 \mathfrak{V}_\beta&\stackrel{\scriptscriptstyle\text{def}}=\{(u,u_\Gamma)\in H^1(\Omega)\times H^1(\Gamma): u_\Gamma=u|_\Gamma \}, \quad \text{for } \beta>0.
\end{align}
Of course, for all $\beta>0$, we have $ \mathfrak{V}_\beta\subset \mathfrak{V}_0$. For every $k\in\N$, the $k$th eigenvalue of the Wentzel-Laplace operator $B_\beta$ satisfies

\begin{equation}\label{char}
 \lambda_{W,k}^{\beta}={\underset{V\in \mathfrak{V}(k)}{\min}  }\underset {0\neq u\in V} {\max} R_\beta(u), \quad k\geqslant 0,
\end{equation}
where $R_\beta(u) $,  the Rayleigh quotient for $\mathrm{B}_\beta$, is given by 
\begin{equation}\label{rayleigh}
R_\beta(u) \stackrel{\scriptscriptstyle\text{def}}=\frac{\int_\Om{|\nabla u|^2 \mathrm{d}_M+\beta\int_{\Gamma}{|\nabla_\Gamma u|^2 \mathrm{d}_\Gamma}}}{\int_{\Gamma}{u^2 \mathrm{d}_\Gamma}}, \quad \text{for all } u\in \mathfrak{V}_\beta\backslash\{0\}.
\end{equation}

The eigenvalues for the Dirichlet-to-Neumann map $\mathrm{B}_0=\mathrm{N_ D}$ are those of the well-known Steklov problem:
\begin{equation}\label{Steklov}
\begin{cases}
\Delta u=0, & {\rm in \ }\Omega,\\
\partial_\n u=\lambda^S u, & {\rm on \ }\Gamma.
\end{cases}
\end{equation}
A good discussion of this problem can be found in \cite{GP}. The Steklov eigenvalues are then  $ \{ \lambda_{W,k}^{0}\}_{k=0}^\infty$
which we shall denote equivalently as $\{ \lambda_{S,k}\}_{k=0}^\infty $.
 They behave according to the following asymptotic formula:
 \begin{equation}
\label{WeylS}
\lambda_{S,k}= C_n k^{\frac{1}{n-1}}+O(k^{\frac{1}{n-1}}),\quad k\rightarrow\infty,
\end{equation}
where  $C_n=\frac{2\pi}{\left(\omega_{n-1} Vol(\Gamma)\right)^{\frac{1}{n-1}}} $. We refer the reader to \cite[Section 4]{sandgren}. For $\beta>0$, the Weyl asymptotic for $ \lambda_{W,k}^{\beta}$  can be deduced  directly from properties of perturbed forms using the   asymptotic behaviour of the spectrum of $C_\beta$,
\begin{equation}
\lambda_{C_\beta,k}=\beta C_n^2k^{\frac{2}{n-1}}+O(k^{\frac{2}{n-1}}),\quad k\rightarrow\infty,
\end{equation}
and thanks to H\"ormander:
 \begin{equation}
\label{WeylW}
\lambda_{W,k}^{\beta}=\beta C_n^2k^{\frac{2}{n-1}}+O(k^{\frac{2}{n-1}}),\quad k\rightarrow\infty.
\end{equation}
See in particular \cite[Prop 2.7, (2.37)]{Gal2015}.

 Let $n\geqslant 2$ and $ (M, g)$ be an $n$-dimensional compact connected Riemannian manifold with  smooth boundary $\Gamma$, we denote by $\{ \eta_{k}\}_{k=0}^\infty $ the eigenvalues of $\Delta_\Gamma$. From here on we always assume that $\beta>0$ is fixed. 

Let $n\in\N_{\geqslant 2}$, $\overline{h}\in\R_{>0}$ and $K_-,K_+,\kappa_-,\kappa_+\in \R $. Throughout the paper, we  designate by
\begin{enumerate}
\item
$\mathfrak{M}^n(K_-,\kappa_-)$  the class of all smooth compact Riemannian manifolds $M$ of
dimension $n$ with non-empty boundary such that
the Ricci curvature of $M$ is bounded from below by $(n-1)K_-$ and the mean curvature of the boundary by $\kappa_-$:
\begin{itemize}
\item $ (n-1)K_- \leqslant \ric \text{ in } M_{\overline{h}},$ 
\item  $ \kappa_-\leqslant H_0.$
\end{itemize}
\item $\mathfrak{M}^n(K_-,K_+,\kappa_-,\kappa_+)$ the class  of all smooth compact Riemannian manifolds $M$ of dimension $n$ and whose sectional curvature $K$ and principal curvatures $\{\kappa_i, i=1\ldots,n-1\}$ of the boundary $\Gamma $ are bounded in the following way:
\begin{itemize}
\item $K_-\leqslant K\leqslant K_+$ in $M_{\overline{h}},$
\item $\kappa_-\leqslant\kappa_1\leqslant\kappa_2\leqslant\ldots\leqslant\kappa_{n-1}\leqslant\kappa_+.$
\end{itemize}
\end{enumerate}
Here $\overline{h}$ stands for  the rolling radius of $M$. The definition is given in \eqref{ss19022020} and $M_{\overline{h}}:=\{x\in M, d_\Gamma(x)<\overline{h}\}$ designates the tubular neighbourhood of $\Gamma$ of width $\overline{h}$ where $ d_\Gamma$ is the distance function from the boundary $\Gamma$. 

Because of the characterisation \eqref{char} with the min-max principle for the eigenvalues $\{\eta_k \}_{k=0}^\infty$ and $\{\lambda^S_k\}_{k=0}^\infty$, we immediately have  for every $k\in \N$ that
$$\lambda^S_k+\beta\eta_k\leqslant \lambda_{W,k}^{\beta}. $$
In our results we give reciprocal comparisons.
\begin{thm}\label{mainth1}
There exists an explicit constant $\overline{B} $ such that on each manifold $M$ in the class $\mathfrak{M}^n(K_-,K_+,\kappa_-,\kappa_+)$ the following
inequality is satisfied 
\begin{equation}
\lambda_{W,k}^{\beta} \leqslant  \left[\frac{1}{\sqrt{\beta}}+\sqrt{\overline{B}+ \beta\eta_k}  \right]^2,\qquad \forall~k\in\N.
\end{equation}
\end{thm}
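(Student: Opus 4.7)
The plan is to apply the min--max characterisation \eqref{char} to a test space of dimension $k+1$ built from boundary Laplacian eigenfunctions extended into $M$ by the normal geodesic flow. Let $\{f_i\}_{i=0}^{k}$ be an $L^2(\Gamma)$-orthonormal family of eigenfunctions of $\Delta_\Gamma$ with eigenvalues $\eta_0\leqslant\cdots\leqslant\eta_k$, and fix a Lipschitz profile $\phi\colon[0,\overline{h}]\to[0,1]$ with $\phi(0)=1$ and $\phi(\overline{h})=0$. Working in Fermi coordinates $(y,t)\in\Gamma\times[0,\overline{h})$, with $t=d_\Gamma$, set $u_i(y,t)=\phi(t)f_i(y)$ on $M_{\overline h}$ and extend by zero outside. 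The subspace $V=\mathrm{span}(u_0,\dots,u_k)\subset\mathfrak{V}_\beta$ is of dimension $k+1$, so \eqref{char} gives $\lambda_{W,k}^\beta\leqslant\max_{u\in V}R_\beta(u)$.

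For $u=\sum_i c_i u_i$ with trace $f=\sum_i c_i f_i$, the Fermi splitting $g=dt^2+g_t$ yields $|\nabla u|^2=|\phi'(t)|^2 f^2+\phi(t)^2\,|\nabla_{g_t}f|^2_{g_t}$ and $\mathrm{d}_M=\theta(y,t)\,dt\,\mathrm{d}_\Gamma$ with $\theta=\sqrt{\det g_t/\det g_0}$. The geometric heart of the argument is the Riccati comparison for the shape operator along normal geodesics: the two-sided sectional-curvature bounds $K_-\leqslant K\leqslant K_+$ and the principal-curvature bounds $\kappa_-\leqslant\kappa_i\leqslant\kappa_+$ defining $\mathfrak{M}^n(K_-,K_+,\kappa_-,\kappa_+)$ furnish explicit comparison functions $\theta^+(t)$ and $A^+(t)$, depending only on $n$, $K_\pm$, $\kappa_\pm$ and $\overline h$, such that $\theta(y,t)\leqslant\theta^+(t)$ and $|\nabla_{g_t}f|^2_{g_t}\leqslant A^+(t)\,|\nabla_\Gamma f|^2$. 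Fubini then gives
\begin{equation*}
\int_M|\nabla u|^2\,\mathrm{d}_M\leqslant P_1(\phi)\int_\Gamma f^2\,\mathrm{d}_\Gamma+P_2(\phi)\int_\Gamma|\nabla_\Gamma f|^2\,\mathrm{d}_\Gamma,
\end{equation*}
with $P_1(\phi)=\int_0^{\overline h}|\phi'|^2\theta^+\,dt$ and $P_2(\phi)=\int_0^{\overline h}\phi^2 A^+\theta^+\,dt$. Using $\phi(0)=1$, so $\int_\Gamma u^2=\int_\Gamma f^2$, together with the boundary min--max $\int_\Gamma|\nabla_\Gamma f|^2\leqslant\eta_k\int_\Gamma f^2$, I would obtain
\begin{equation*}
R_\beta(u)\leqslant P_1(\phi)+\bigl(P_2(\phi)+\beta\bigr)\eta_k.
\end{equation*}

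The remaining task is to calibrate $\phi$ so that this bound fits the target $\bigl(1/\sqrt\beta+\sqrt{\overline B+\beta\eta_k}\bigr)^2=1/\beta+\overline B+\beta\eta_k+2\sqrt{(\overline B+\beta\eta_k)/\beta}$. A natural choice is the linear ramp $\phi(t)=(1-t/h)_+$ of width $h=h(\beta,k)$: after the Riccati estimates the leading behaviour is $P_1(\phi)\sim\Theta/h$ and $P_2(\phi)\sim\mathcal A\Theta h/3$ for constants $\Theta,\mathcal A$ depending only on $n$, $K_\pm$, $\kappa_\pm$ and $\overline h$, and tuning $h$ essentially as $h\asymp\sqrt{\beta/(\overline B+\beta\eta_k)}$ balances the two contributions and absorbs the residual geometric constants into an explicit additive $\overline B$.

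The main obstacle is the Riccati comparison step, which requires a careful analysis of the Weingarten operator along normal geodesics and of the Jacobi equation in $M_{\overline h}$ in order to extract sharp, elementary bounds $\theta\leqslant\theta^+$ and $|\nabla_{g_t}f|^2_{g_t}\leqslant A^+|\nabla_\Gamma f|^2$ from the two-sided curvature hypotheses. A secondary subtlety is the cutoff calibration: the asymmetric form $\bigl(1/\sqrt\beta+\sqrt{\overline B+\beta\eta_k}\bigr)^2$, rather than a plainer multiplicative bound, reflects the square-root subadditive decomposition of $R_\beta(u)$ into a Dirichlet-energy piece matched with $1/\beta$ and a boundary-Laplacian piece matched with $\overline B+\beta\eta_k$, and $\phi$ must be chosen so that these two contributions line up exactly with their respective slots.
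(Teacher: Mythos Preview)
Your approach is genuinely different from the paper's, and as written it does not deliver the specific form of the bound in the statement.

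\medskip

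\textbf{What the paper actually does.} The paper does \emph{not} use Fermi extensions with a cutoff profile. It takes the \emph{harmonic} extensions $\phi_j=\wedge\varphi_j$ of the boundary Laplacian eigenfunctions and exploits a Pohozaev--type identity (Proposition~\ref{PohWentzel}) for harmonic $u$:
\[
R_\beta(u)=\int_M|\nabla u|^2\,\mathrm{d}_M-\frac{\beta}{h}\int_{M_h}\bigl(|\nabla u|^2\Delta\eta+2\hess\eta(\nabla u,\nabla u)\bigr)\,\mathrm{d}_M+\beta\int_\Gamma(\partial_\n u)^2\,\mathrm{d}_\Gamma,
\]
with $\eta=\tfrac12 d_h^2$. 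The Riccati comparison is used only to bound the Hessian term by $(Bh+1)\int_{M_h}|\nabla u|^2$. The crucial structural step, absent from your proposal, is that for harmonic $u$ with $\int_\Gamma u^2=1$ one has, by Cauchy--Schwarz and Green's formula,
\[
\int_\Gamma(\partial_\n u)^2\,\mathrm{d}_\Gamma\geqslant\Bigl(\int_M|\nabla u|^2\,\mathrm{d}_M\Bigr)^2.
\]
This turns the Pohozaev identity into a \emph{quadratic} inequality in $X=\int_M|\nabla u|^2=R_\beta(u)-\beta\int_\Gamma|\nabla_\Gamma u|^2$, namely $\beta X^2-\tilde B X\leqslant R_\beta(u)$, and it is the \emph{solution of this quadratic} that produces the shape $\bigl[1/\sqrt\beta+\sqrt{\overline B+\beta\eta_k}\bigr]^2$.

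\medskip

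\textbf{Why your calibration cannot hit that shape.} Your bound for a fixed profile is linear in $\eta_k$, $R_\beta(u)\leqslant P_1(\phi)+(P_2(\phi)+\beta)\eta_k$, so the envelope over the ramp width $h$ is (with $P_1\sim\Theta/h$, $P_2\sim\Theta' h$)
\[
\lambda_{W,k}^\beta\;\lesssim\;\beta\eta_k+2\sqrt{\Theta\Theta'}\,\sqrt{\eta_k}+\text{(bdry terms)}.
\]
But for \emph{every} choice of $\overline B$, the target expands as
\[
\Bigl[\tfrac{1}{\sqrt\beta}+\sqrt{\overline B+\beta\eta_k}\Bigr]^2=\beta\eta_k+2\sqrt{\eta_k}+O(1)\qquad(\eta_k\to\infty),
\]
so the coefficient of $\sqrt{\eta_k}$ is exactly $2$, independently of $\overline B$. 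Your geometric constant $2\sqrt{\Theta\Theta'}$ has no reason to equal $2$ (generically it exceeds $2$), hence no choice of $\overline B$ will dominate your envelope for large $k$. The suggested tuning $h\asymp\sqrt{\beta/(\overline B+\beta\eta_k)}$ does not fix this: plugging it in still yields a $\sqrt{\eta_k}$ coefficient proportional to $\sqrt{\Theta\Theta'}$, not to $1$. In short, the ``$1/\sqrt\beta$'' and the exact $(\,\cdot\,)^2$ structure in the theorem are artifacts of solving a quadratic coming from harmonicity and the Pohozaev identity; a Fermi--cutoff test space produces a legitimate (and in some regimes even better) upper bound, but not \emph{this} one.
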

\begin{thm}\label{mainth2}There exists an explicit constant $\overline{A} $ such that on each manifold $M$ in the class $\mathfrak{M}^n(K_-,K_+,\kappa_-,\kappa_+)$ the following
inequality is satisfied 
\begin{equation}
 \lambda_{W,k}^{\beta}\leqslant(1+\beta\overline{A})\lambda^S_k+ \beta(\lambda^S_k)^2,\qquad \forall~k\in\N.
\end{equation}
\end{thm}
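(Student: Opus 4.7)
My plan is to apply the min-max characterisation \eqref{char} with a test subspace built from Steklov eigenfunctions. Let $u_0,\dots,u_k$ denote the Steklov eigenfunctions with eigenvalues $\lambda^S_0\leqslant\dots\leqslant\lambda^S_k$, normalised so that $\{u_i|_\Gamma\}$ is orthonormal in $L^2(\Gamma)$. The smoothness of $\Gamma$ built into $\mathfrak{M}^n(K_-,K_+,\kappa_-,\kappa_+)$ makes each $u_i$ smooth up to $\Gamma$, so $V:=\mathrm{span}(u_0,\dots,u_k)\subset\mathfrak{V}_\beta$. For any $u=\sum_{i=0}^k a_i u_i\in V$, orthogonality together with $\int_\Om \nabla u_i\cdot\nabla u_j\,\mathrm{d}_M=\lambda^S_i\delta_{ij}$ yields the two routine bounds
\begin{equation*}
\int_\Om|\nabla u|^2\,\mathrm{d}_M \leqslant \lambda^S_k\int_\Gamma u^2\,\mathrm{d}_\Gamma, \qquad \int_\Gamma(\partial_\n u)^2\,\mathrm{d}_\Gamma \leqslant (\lambda^S_k)^2\int_\Gamma u^2\,\mathrm{d}_\Gamma.
\end{equation*}
Using $|\nabla_\Gamma u|^2 = |\nabla u|^2 - (\partial_\n u)^2$ on $\Gamma$, the problem reduces to an upper bound for $\int_\Gamma|\nabla u|^2\,\mathrm{d}_\Gamma$ in terms of $\int_\Om|\nabla u|^2\,\mathrm{d}_M$ and $\int_\Gamma (\partial_\n u)^2\,\mathrm{d}_\Gamma$.

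For that bound I would apply a Rellich--Pohozaev identity to the harmonic function $u$ with vector field $X=\nabla\rho$, where $\rho=h(d_\Gamma)$ and $h$ is a smooth cut-off with $h(0)=0$, $h'(0)=1$, and $\operatorname{supp} h\subset[0,\overline h)$. Since $\nabla d_\Gamma=-\n$ on $\Gamma$, one has $\partial_\n\rho=-1$ and $\nabla\rho\cdot\nabla u=-\partial_\n u$ on $\Gamma$; for harmonic $u$ the identity then collapses to
\begin{equation*}
\int_\Gamma|\nabla u|^2\,\mathrm{d}_\Gamma = 2\int_\Gamma(\partial_\n u)^2\,\mathrm{d}_\Gamma + \int_{M_{\overline h}} Q_\rho(\nabla u,\nabla u)\,\mathrm{d}_M,
\end{equation*}
where $Q_\rho$ is the symmetric tensor built linearly from $\hess\rho$ and $(\Delta\rho)g$. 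Both of these reduce via $\rho=h(d_\Gamma)$ to $\hess d_\Gamma$ on the parallel hypersurfaces $\{d_\Gamma=t\}$, and this shape operator satisfies a matrix Riccati equation along unit-speed normal geodesics from $\Gamma$: the bounds $K_-\leqslant K\leqslant K_+$ control the source term, while $\kappa_-\leqslant\kappa_i\leqslant\kappa_+$ control the initial data, giving two-sided bounds for $\hess d_\Gamma$ throughout $M_{\overline h}$. Extracting the resulting $L^\infty$ bound on the operator norm of $Q_\rho$ produces an explicit constant $\overline A=\overline A(n,K_\pm,\kappa_\pm,\overline h)$ with
\begin{equation*}
\int_\Gamma|\nabla u|^2\,\mathrm{d}_\Gamma \leqslant 2\int_\Gamma(\partial_\n u)^2\,\mathrm{d}_\Gamma + \overline A\int_\Om|\nabla u|^2\,\mathrm{d}_M.
\end{equation*}

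Combining the two steps,
\begin{equation*}
\int_\Gamma|\nabla_\Gamma u|^2\,\mathrm{d}_\Gamma \leqslant \int_\Gamma(\partial_\n u)^2\,\mathrm{d}_\Gamma + \overline A\int_\Om|\nabla u|^2\,\mathrm{d}_M \leqslant \bigl[(\lambda^S_k)^2 + \overline A\,\lambda^S_k\bigr]\int_\Gamma u^2\,\mathrm{d}_\Gamma,
\end{equation*}
so $R_\beta(u)\leqslant(1+\beta\overline A)\lambda^S_k+\beta(\lambda^S_k)^2$ uniformly on $V$, and the min-max principle \eqref{char} finishes the theorem. The genuine obstacle is the Riccati step, which must deliver a single constant $\overline A$ uniform over the whole class $\mathfrak{M}^n(K_-,K_+,\kappa_-,\kappa_+)$; I expect the distance-function machinery used for \thmref{mainth1} to supply this bound essentially off the shelf, so that the new content here is the Pohozaev bookkeeping and the observation that feeding Steklov eigenfunctions into $R_\beta$ trades the boundary-Laplacian term appearing in \thmref{mainth1} for the $(\lambda^S_k)^2$ term.
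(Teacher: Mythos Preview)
Your proposal is correct and matches the paper's proof essentially line for line: the paper's Proposition~\ref{PohWentzel} is precisely your Rellich--Pohozaev identity (with the specific choice $\rho=\eta=\tfrac{1}{2}d_h^2$ in place of a generic cutoff $h(d_\Gamma)$), Lemmas~\ref{lm19022020a}--\ref{lm20022020b} carry out the Riccati bound on $Q_\rho$ that you anticipate, and the paper then tests against the span of Steklov eigenfunctions exactly as you do. The only cosmetic difference is that the paper first packages the Pohozaev bound as $R_\beta(u)\leqslant(1+\beta\overline A)\int_M|\nabla u|^2+\beta\int_\Gamma(\partial_\n u)^2$ and then inserts the Steklov test space, whereas you bound $\int_\Gamma|\nabla_\Gamma u|^2$ first; the constant $\overline A=2\bigl(\tfrac{B}{n-1}+\tfrac{n-1}{\h}\bigr)$ with $\h=\h(K_+,\kappa_+)$ from \eqref{eq160202020b} is indeed uniform over the class, confirming your expectation.
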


\begin{thm}\label{AvecReilly}
Let $\kappa_->0$, then each manifold in the class $\mathfrak{M}^n(K_-,\kappa_-)$ satisfies the following
inequality,
\begin{equation}
\lambda_{W,k}^{\beta} \leqslant  \frac{1}{2(n-1)\kappa_-}\left[\left(K_-+2\eta_k \right)+\sqrt{(K_-+2\eta_k)^2-4\kappa_-(n-1)\eta_k} \right]+\beta\eta_k.
\end{equation}
\end{thm}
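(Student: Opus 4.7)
The plan is to combine the min--max characterization \eqref{char} with Reilly's integral formula. As a $(k{+}1)$-dimensional trial subspace of $\mathfrak{V}_\beta$ I would take $V=\spn\{(u_i,\phi_i) : i=0,\ldots,k\}$, where $\{\phi_i\}$ is an $L^2(\Gamma)$-orthonormal system of eigenfunctions of $\Delta_\Gamma$ with $\Delta_\Gamma\phi_i=\eta_i\phi_i$ and $u_i\in H^1(M)$ is the harmonic extension of $\phi_i$. For $u=\sum c_i u_i$ with trace $z=\sum c_i\phi_i$, the estimate $\int_\Gamma|\nabla_\Gamma z|^2\,\mathrm{d}_\Gamma=\sum c_i^2\eta_i\leqslant\eta_k\int_\Gamma z^2\,\mathrm{d}_\Gamma$ splits the Rayleigh quotient as
\[
R_\beta(u)\leqslant \lambda + \beta\eta_k,\qquad \lambda:=\frac{\int_M|\nabla u|^2\,\mathrm{d}_M}{\int_\Gamma z^2\,\mathrm{d}_\Gamma}.
\]
It therefore suffices to bound $\lambda$ uniformly on $V$ by the larger root of the quadratic $(n-1)\kappa_-\,x^2-(K_-+2\eta_k)x+\eta_k=0$.

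To bound $\lambda$ I would apply Reilly's integral formula to the harmonic function $u$. Writing $y=\partial_\n u$, the identity has the schematic form
\[
\int_M\bigl[|\nabla^2 u|^2+\ric(\nabla u,\nabla u)\bigr]\mathrm{d}_M+\int_\Gamma II(\nabla_\Gamma z,\nabla_\Gamma z)\,\mathrm{d}_\Gamma+(n-1)\int_\Gamma H_0\,y^2\,\mathrm{d}_\Gamma = 2\int_\Gamma y\,\Delta_\Gamma z\,\mathrm{d}_\Gamma,
\]
where $II$ denotes the second fundamental form of $\Gamma$. Dropping the nonnegative Hessian term, invoking the class hypotheses $\ric\geqslant(n-1)K_-$ and $H_0\geqslant\kappa_->0$, and controlling the indefinite $II$-contribution via the Gauss equation (which trades $II$ for intrinsic and ambient curvature data) gives an inequality coupling $\int_M|\nabla u|^2$, $\int_\Gamma y^2$, and $\int_\Gamma y\Delta_\Gamma z$.

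Two applications of Cauchy--Schwarz then collapse the geometric information to a scalar inequality. Harmonicity of $u$ gives $\int_\Gamma yz=\int_M|\nabla u|^2=\lambda\int_\Gamma z^2$, whence $\int_\Gamma y^2\geqslant\lambda^2\int_\Gamma z^2$. Since $z\in\spn(\phi_0,\ldots,\phi_k)$, also $\int_\Gamma(\Delta_\Gamma z)^2\leqslant\eta_k^2\int_\Gamma z^2$, so $\int_\Gamma y\Delta_\Gamma z\leqslant\eta_k\bigl(\int_\Gamma y^2\int_\Gamma z^2\bigr)^{1/2}$. Substituting yields
\[
(n-1)\kappa_-\,\lambda^2-(K_-+2\eta_k)\lambda+\eta_k\leqslant 0,
\]
whose larger real root is precisely the bracketed expression of the theorem. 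Combined with the first step this delivers the stated upper bound on $\lambda_{W,k}^\beta$.

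The hard part, I expect, is the treatment of $\int_\Gamma II(\nabla_\Gamma z,\nabla_\Gamma z)$: the class $\mathfrak{M}^n(K_-,\kappa_-)$ controls only the mean curvature of $\Gamma$ from below, not the individual principal curvatures, so this contribution is indefinite and cannot simply be discarded. Producing the precise coefficient $K_-$ (rather than $(n-1)K_-$) and the constant term $\eta_k$ — which is what turns what would otherwise be a linear bound on $\lambda$ into the sharper quadratic one — is the subtle point where the Gauss--Codazzi equations must be invoked together with the Ricci bound, and where the choice of trial functions concentrated near $\Gamma$ inside the tubular neighbourhood $M_{\overline h}$ (on which the curvature hypotheses are assumed to hold) plays its role.
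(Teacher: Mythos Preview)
Your overall strategy --- harmonic extensions of $\Delta_\Gamma$-eigenfunctions as trial space, Reilly's identity applied to the harmonic test function, Cauchy--Schwarz to reach a quadratic inequality in $\lambda=\int_M|\nabla u|^2/\int_\Gamma z^2$ --- is exactly the paper's approach. Where you diverge is in the handling of the second-fundamental-form term $\int_\Gamma \Pi(\nabla_\Gamma z,\nabla_\Gamma z)$. You read the class $\mathfrak{M}^n(K_-,\kappa_-)$ as providing only a mean-curvature lower bound, declare this term indefinite, and propose a detour through the Gauss--Codazzi equations. The paper does none of this: the opening sentence of the section containing the proof states that the \emph{principal} curvatures of $\Gamma$ are assumed bounded below (so the working hypothesis is in effect stronger than the bare class definition suggests), and the proof then simply uses $\Pi(\nabla_\Gamma z,\nabla_\Gamma z)\geqslant\kappa_-\,|\nabla_\Gamma z|^2$ pointwise. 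This, together with $H\geqslant(n-1)\kappa_-$, the Ricci lower bound, the identity $\int_\Gamma(\partial_\n u)\Delta_\Gamma z=\eta_j\int_M|\nabla u|^2$ (valid because the trace is a single eigenfunction $\varphi_j$), and $\int_M|\nabla u|^2\leqslant\bigl(\int_\Gamma(\partial_\n u)^2\bigr)^{1/2}$, already yields the quadratic in $\int_M|\nabla u|^2$.

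Your Gauss--Codazzi suggestion is therefore a genuine gap rather than a refinement: with only a mean-curvature bound the integral $\int_\Gamma \Pi(\nabla_\Gamma z,\nabla_\Gamma z)$ really is uncontrolled, and the Gauss equation would drag in the intrinsic curvature of $\Gamma$, which is not among the hypotheses; no such manipulation will produce the stated constants. The ``hard part'' you flag is resolved by a stronger curvature assumption, not a subtler argument. Your closing remark about trial functions concentrated in the tubular neighbourhood $M_{\overline h}$ is also off target: Reilly's identity is global on $M$, and the distance-function machinery of Sections~2--3 plays no role whatsoever in this proof.
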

 

To prove our main results, analytical properties of the distance functions served as a key tool.
This perspective is inspired by \cite{provstub},\cite{xiong2017} and \cite{colbGirHas}.
Let $(M,g)$ be a smooth compact $n$-dimensional Riemannian manifold with non-empty boundary $\Gamma$. We define the distance function from   $\Gamma$ by $d_\Gamma(x) = \inf\{\dist(x, y), y\in \Gamma \}$ for every $x$ in $M$, where $\dist$ is the distance in $M$ induced by the metric tensor $g$. The level sets of $d_\Gamma$ will be denoted by $\Gamma_r:=\{x\in M, d_\Gamma(x)=r \}$. Assume that $r>0$ is small enough such that $\Gamma_r=d_\Gamma^{-1}(r)$ is open and consists entirely of regular points for $d_\Gamma$. In this case $\Gamma_r$ is clearly a hypersurface.
The interesting fact about the distance function $d_\Gamma$ is that the inward normal vector field to $\Gamma_r$ is simply $\grad d_\Gamma$, and thus the second fundamental form  is the restriction of the Hessian $\hess(d_\Gamma)$ to $T\Gamma_r$; so the eigenvalues of  $\hess d_\Gamma$ are exactly the principal curvatures of $\Gamma_r$. The next section is devoted to sketching relevant properties of distance functions.
%


%
%

\section{Geometric properties of distance functions}
\subsection{Geometry of a distance function to a hypersurface}
 If $(M,g)$ is a Riemannian manifold and $\Sigma$ a compact smooth hypersurface in $M$, the  geometric quantities  that we are interested in here correspond to the eigenvalues of the Hessian of the (squared) distance function  from $\Sigma$, 
 $$d_\Sigma (x):=\dist(x,\Sigma)=\min\{\dist(x,y):~ y\in \Sigma \}$$
 for $x$ in a small tubular neighbourhood of $\Sigma$. These eigenvalues are related to the geometry of $\Sigma$ inasmuch as are exactly the principal curvatures of $\Sigma$. The restriction of the Hessian of $d_\Sigma$ to $T\Sigma$ coincides with the second fundamental form $\Pi_\Sigma$ of $\Sigma$. If $\Sigma$ is  compact of co-dimension one and  $2$-sided in $M$, to impose a unique correspondence one can constrain $d_\Sigma$ to be the signed distance function to $\Sigma$ given as the distance from $\Sigma$ with plus or minus sign depending on which side of $\Sigma$ we are on. However, since the signed distance function can make no sense, as in the case of higher co-dimension problems, a common element  to work with is the squared distance function
 $$\eta(x):=\frac{1}{2}d^2_{\Sigma}(x). $$ 
 See for instance \cite{AmbroisioMantegazza}. The factor $\frac{1}{2}$ is introduced  for convenience, to simplify several identities.

Our starting point, with the next proposition, is to set forth geometric concepts related to a specific smooth function, with the goal of arriving to curvatures on the Riemannian manifold. Let $(M,g)$ be a complete Riemannian manifold, $\Omega \subset M$ an open subset and  $f:\Omega \subset M\longrightarrow  \R$. Let  $\grad f$ denote the gradient and  $\hess f$ the Hessian of $f$. Let $\nabla$ denote the Levi-Civita connection and $S(X):= \grad_X \grad f$ be the $(1, 1)$-tensor corresponding to $\hess f$.
Let $a\in \R$ and $\Sigma \subset f^{-1}(a)$ open. We assume that $\Sigma$  consists entirely of regular points for $f$. In this case, $\Sigma$ is clearly a hypersurface. 
The second fundamental form of a hypersurface $\Sigma \subset M$ with a fixed unit normal vector field $\n: \Sigma \longrightarrow T^\perp =\{ v \in T_pM:~ p \in \Sigma, v \perp T_p\Sigma \}$ is defined as the $(0,2)$-tensor $\Pi(X,Y)= g(\grad_X N, Y)$ on $\Sigma$.
 Since $X,Y\in T\Sigma$ are perpendicular to $N$ we have $g (\grad_X N,N) = 0$.
One can also define $\Pi(X, \cdot) = g(\grad_X N, \cdot)=:g(\mathbf{S}(X),\cdot)$ for $X\in T\Sigma$ where $\mathbf{S}$ stands for the shape operator. 
We start by relating the second fundamental form of $\Sigma$ to $f$.
\begin{prop}\label{prop2101}
 The following properties hold:
 \begin{enumerate}
\item $ \frac{\grad f}{|\grad f|}$ is a unit normal to $\Sigma$,
\item $\Pi(X,Y)=\frac{1}{|\grad f|}\hess f (X,Y)$ for all $X, Y \in T\Sigma$.
 \end{enumerate}
\end{prop}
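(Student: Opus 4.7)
The plan is to unpack the two assertions directly from the definitions; both are standard computations in Riemannian geometry that flow from the product rule applied to $N = \nabla f / |\nabla f|$ on the level set.

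For part (1), I would start from the fact that $\Sigma \subset f^{-1}(a)$, so $f$ is constant along $\Sigma$. For any tangent vector $X \in T\Sigma$ this gives $X(f) = 0$, and rewriting this in terms of the metric, $g(\nabla f, X) = df(X) = 0$. Hence $\nabla f$ is $g$-orthogonal to every tangent direction of $\Sigma$. The regularity hypothesis (every point of $\Sigma$ is a regular point of $f$) means $\nabla f \neq 0$ on $\Sigma$, so the normalization $\nabla f / |\nabla f|$ is well defined and yields a unit normal field.

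For part (2), set $N := \nabla f / |\nabla f|$, which part (1) has identified as a unit normal, and fix $X, Y \in T\Sigma$. I would compute
\begin{equation*}
\Pi(X,Y) = g(\nabla_X N, Y) = g\!\left(\nabla_X\!\left(\frac{\nabla f}{|\nabla f|}\right), Y\right).
\end{equation*}
Applying the Leibniz rule to the product of the scalar $1/|\nabla f|$ and the vector field $\nabla f$ gives
\begin{equation*}
\nabla_X N = \frac{1}{|\nabla f|}\,\nabla_X \nabla f + X\!\left(\frac{1}{|\nabla f|}\right)\nabla f.
\end{equation*}
Pairing with $Y$ and using $g(\nabla f, Y) = 0$ (from part (1), since $Y$ is tangent to $\Sigma$) kills the second term. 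The first term is $\frac{1}{|\nabla f|} g(\nabla_X \nabla f, Y) = \frac{1}{|\nabla f|} \mathrm{Hess}\, f(X,Y)$, because the Hessian as a $(0,2)$-tensor is defined precisely by $g(\nabla_X \nabla f, Y)$.

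There is no real obstacle: the only points requiring minor care are (a) verifying that $\nabla_X N$ is well defined on $\Sigma$ (it is, since $N$ extends smoothly to a neighborhood via the same formula on the regular set of $f$), and (b) being consistent about sign and orientation conventions so that the identification $\Pi(X,Y) = g(\nabla_X N, Y)$ matches the convention used throughout the paper. Once those are pinned down, both statements follow in a few lines.
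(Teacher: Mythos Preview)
Your proposal is correct and follows essentially the same approach as the paper: both parts are proved by noting that $X(f)=0$ for $X\in T\Sigma$ gives orthogonality of $\nabla f$, and then applying the Leibniz rule to $\nabla_X(\nabla f/|\nabla f|)$ and using $g(\nabla f,Y)=0$ to kill the scalar-derivative term. The paper's argument is line-for-line the same computation, only slightly terser.
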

\begin{proof}
 \begin{enumerate}
\item If $X\in T\Sigma$ is a tangent vector field to $\Sigma$, $D_Xf=0$, then $ \frac{\grad f}{|\grad f|}$ is perpendicular to $\Sigma$. It is a unit normal to $\Sigma$ since it has unit length.
\item Take $N=\frac{\grad f}{|\grad f|}$ as unit normal from the previous point. Then
\begin{align*}
 g(\grad_X N, Y)&=g\left(\grad_X \left(\frac{\grad f}{|\grad f|}\right), Y\right)\\
 &=g\left(\frac{1}{|\grad f|}\grad_X \grad f, Y\right)+g\left(\grad_X \left(\frac{1}{|\grad f|}\right)\grad f, Y\right)\\
 &=\frac{1}{|\grad f|}g\left(\grad_X \grad f, Y\right).
\end{align*}
\end{enumerate}
\qedhere
\end{proof}

 In the same way, we now briefly recall some useful results connecting  analytical properties of distance functions and the geometric invariants of the manifold $M$.

Let $\Omega\subset (M,g)$ be an open domain.We call distance  function on $\Omega$ every smooth function  $f:\Omega\longrightarrow \R$ which is a solution to the Hamilton-Jacobi equation 
$$|\grad f|\equiv 1.$$
Level sets of a distance functions are hypersurfaces (i.e. $(n-1)$-dimensional submanifolds of $M$).

In what follows, we use the notation $\Gamma_s:=\{x\in \Omega, f(x)=s\}$ for the level sets of $f$. From Proposition \ref{prop2101}, one has on each level set, that  $\hess f=\Pi$, since $|\grad f|^2=1$. The $(1,1)$-tensor $S(X):=\grad_X\grad_f$  corresponds to both $\hess f$. It determines  how the unit normal to $\Gamma_s$ changes. The intuitive idea behind the operator  $S$, which stands here for the second derivative of $f$ but also for the shape operator, is to evaluate how the induced metric on $\Gamma_s$ changes.

Let $(M,g)$ be a smooth compact $n$-dimensional Riemannian manifold with non-empty boundary $\Gamma$. A particular distance function we are interested in is $d_\Gamma(x)= \inf\{ y\in \Gamma d(x, y)\}$, the distance function from the boundary, where $d$ is the distance coming from the metric tensor $g$. Let $s >0$ small enough such that $\Gamma_s:=d^{-1}_\Gamma(s)$ consists entirely of regular points for $d_\Gamma$. In this case, $\Gamma_s$ is the hypersurface bounding the submanifold $\Omega_s=\{x\in M : f(x)\geqslant s\}$.
Let $p\in \Gamma_s$ and $\T_p\Gamma_s$ denote the tangent space of $\Gamma_s$ at $p$. A given vector $v\in {\mathrm  {T}}_{p}M$ is normal to $\Gamma_s$ if $g(v,v')=0$  for all $v'\in {\mathrm  {T}}_{p}\Gamma_s$ (i.e. $v$ is orthogonal to ${\mathrm  {T}}_{p}(\Gamma_s))$. The normal space to $\Gamma_s$ at $p$ is  the space  ${\mathrm  {T}}_{p}^{\perp}(\Gamma_s)$ of all such $v$. From now on, we choose the unit normal vector field in such a way that the principal curvatures of the Euclidean sphere are taken to be positive. The following lemma links together the function $d_\Gamma$ and geometric data of $\Gamma_s$.
\begin{prop}
\label{prop321}
Let $X$ and $Y$ in $T\Gamma_s$. Then the following properties hold.
\begin{enumerate}[label=(\roman*)]
\item $|\grad d_\Gamma|=1$,
\item $\grad d_\Gamma$ is a unit normal to $\Gamma_s$,
\item $\Pi(X,Y)= \hess d_\Gamma(X,Y) $.\label{p3}
\end{enumerate}
\end{prop}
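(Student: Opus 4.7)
The plan is to observe that Proposition \ref{prop2101} already does most of the work once we know that $d_\Gamma$ is a distance function in the sense defined just above the statement, i.e.\ it satisfies the eikonal equation $|\grad d_\Gamma| \equiv 1$ on the regular set. So the heart of the matter is part (i); parts (ii) and (iii) follow essentially for free.

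For (i), I would argue in the standard way using minimizing geodesics. At a regular point $p$ of $d_\Gamma$ (in particular, $p$ is assumed to lie in the tubular neighborhood on which $d_\Gamma$ is smooth, so $p$ is outside the cut locus of $\Gamma$), there is a unique unit-speed minimizing geodesic $\gamma: [0,s] \to M$ from a nearest boundary point $q \in \Gamma$ to $p$, hitting $\Gamma$ orthogonally at $q$. For any unit vector $v \in T_pM$, the first variation formula gives $D_v d_\Gamma = -g(\dot\gamma(s), v)$, so $\grad d_\Gamma(p) = \dot\gamma(s)$. Since $\gamma$ has unit speed, $|\grad d_\Gamma(p)| = 1$. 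Equivalently, one can invoke the general fact that distance functions from closed sets satisfy $|\grad d| = 1$ almost everywhere, and in our setting smoothness of $d_\Gamma$ on the tube $M_{\overline h}$ upgrades this to a pointwise identity.

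For (ii), since by definition $\Gamma_s = d_\Gamma^{-1}(s)$, the function $d_\Gamma$ is constant along $\Gamma_s$, so for every $X \in T\Gamma_s$ we have $D_X d_\Gamma = g(\grad d_\Gamma, X) = 0$. Thus $\grad d_\Gamma$ is normal to $\Gamma_s$, and by (i) it is a unit normal.

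For (iii), I apply Proposition \ref{prop2101} with $f := d_\Gamma$, $\Sigma := \Gamma_s$, and $a := s$. Part (2) of that proposition yields
\begin{equation*}
\Pi(X,Y) \;=\; \frac{1}{|\grad d_\Gamma|}\,\hess d_\Gamma(X,Y)
\end{equation*}
for $X,Y \in T\Gamma_s$, and substituting (i) collapses the prefactor to $1$, giving the stated identity. The only step that is non-trivial is (i), which is precisely the eikonal equation for the distance function; everything else is bookkeeping from the previous proposition and the definition of $\Gamma_s$ as a level set.
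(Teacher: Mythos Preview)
Your proposal is correct and follows essentially the same route as the paper: once (i) is established, (ii) and (iii) are immediate consequences of Proposition~\ref{prop2101} applied with $f=d_\Gamma$. The paper simply cites \cite[Thm~3.1]{MantegazzaMennucci2003} for (i) rather than giving your first-variation argument; note only that the sign in your formula should read $D_v d_\Gamma = +g(\dot\gamma(s),v)$ (moving $p$ along $\dot\gamma(s)$ increases the distance to $\Gamma$), though the conclusion $\grad d_\Gamma(p)=\dot\gamma(s)$ and hence $|\grad d_\Gamma|=1$ is unaffected.
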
 
The first point of the proof can be found in \cite[Thm 3.1]{MantegazzaMennucci2003}. From there, the rest of the proof is an immediate consequence of Proposition \ref{prop2101} after noting that  $|\grad d_\Gamma|=1$.

While dealing with derivatives, regularity matters are to be kept in mind.
In this section, we will present a summary of basic facts about distance functions and smoothness. We refer to Chavel's book \cite[Part \S III.2]{Chavel1993} for further details.
\subsection{Regularity of distance functions}\label{ss19022020}

Let $(M,g)$ be a complete Riemannian manifold and $p\in M$, we denote by $T_{p}M$ the tangent space at $p$.  It is known that the distance function from the point $p$, $\dist(p,\cdot)$ is smooth near $p$. Proposition \ref{Chardist} characterises the failure of $\dist(p,\cdot)$ to be smooth at some point, but before its statement, we need to introduce some necessary terminology.

Let $v \in T_pM$ and  $c_v:[0,1]\longrightarrow M$ the unique geodesic  satisfying  $c(0)=p$ and $\dot c(0)=v$. The exponential map from $p$ is defined by: $$\exp_p(v)=c_v(1).$$
 It is a standard result that for sufficiently small $v$ in $T_{p}M$, the curve defined by the exponential map, $ \gamma (t):[0,1]\ni t\longrightarrow\exp _{p}(tv)\in M$ is a segment (i.e. minimizing geodesic), and is the unique minimizing geodesic connecting the two endpoints.
 
 Let $\Omega_{p,q}:=\{c:[0,1]\longrightarrow M;~ c \text{ is piecewise } C^1 \text{ and } c(0)= p, c(1) =q\}$, for every $q\in M$.
 A curve $c\in \Omega_{p,q}$ is a segment if  it is parametrised by arc length, i.e.  $|\dot{c}|$ is constant and its length $L(c):=\int_0^1|\dot{c}|$ is equal to $\dist(p,q)$.
 The segment domain is defined by
$$\seg(p)=\{ v \in T_pM;~ \exp_p(tv): [0,1]\longrightarrow M \text{ is a segment}\}.$$
 We denote its interior by $\seg^0(p)$, 
$$\seg^0(p)=\{ tv;~ t\in [0,1],  v\in \seg(p)\}.$$
\begin{prop}\label{Chardist}
\begin{enumerate}
\item 
 Each element of $\exp_p\left(\seg^0(p)\right)$ is joined to $p$ by a unique segment, meaning that $\exp_p$ is injective on $\seg^0 (p)$.
 \item $\exp_p$ has no singularity in $\seg^0(p)$.
 \item 
 Every $v \in \seg(p) \backslash \seg^0(p)$, satisfies at least one of the two following conditions:
 \begin{itemize}
 \item  There exists $v'\in \seg(p)\backslash \{v\}$ such that $\exp_p(v')= \exp_p(v)$, or
 \item $D \exp_p$ is singular at $v$.
 \end{itemize}
\end{enumerate}
\end{prop}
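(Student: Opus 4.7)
The plan is to prove each of the three assertions separately using classical Riemannian geometry tools: the first variation formula for length, the non-minimality of geodesics past conjugate points, and a compactness argument for sequences of segments.

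For (1), suppose for contradiction that $v_1, v_2 \in \seg^0(p)$ are distinct with $\exp_p(v_1) = \exp_p(v_2) = q$. Because both curves $t \mapsto \exp_p(tv_i)$ are segments, $|v_1| = |v_2| = \dist(p, q)$. Since $v_1 \in \seg^0(p)$, I can extend the geodesic $\gamma_1(t) = \exp_p(tv_1)$ as a segment to $t \in [0, 1+\varepsilon]$ for some $\varepsilon > 0$. Concatenating $\gamma_2|_{[0,1]}$ with $\gamma_1|_{[1, 1+\varepsilon]}$ produces a broken piecewise-geodesic path from $p$ to $\gamma_1(1+\varepsilon)$ of total length $(1+\varepsilon)|v_1|$. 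Because $v_1 \neq v_2$ and geodesics are determined by their initial velocity, the corner at $q$ is genuine, i.e. $\dot\gamma_1(1) \neq \dot\gamma_2(1)$; a standard corner-rounding argument (or the first variation formula applied to a small smoothing) produces a strictly shorter curve, contradicting the minimality of $\gamma_1$ on $[0, 1+\varepsilon]$.

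For (2), if $D\exp_p$ were singular at some $v \in \seg^0(p)$, then by definition $v$ would be a conjugate point along $\gamma_v(t) = \exp_p(tv)$. A classical Jacobi-field computation shows that a geodesic cannot remain minimizing beyond its first conjugate point (this is the Jacobi-Morse theorem: one constructs a variation of $\gamma_v$ decreasing length beyond the conjugate parameter). Since $v \in \seg^0(p)$ implies that $\gamma_v$ is still a segment on $[0, 1+\varepsilon]$ for some $\varepsilon > 0$, this is a contradiction.

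For (3), take $v \in \seg(p) \setminus \seg^0(p)$ and a sequence $t_n \downarrow 1$. Since $t_n v \notin \seg(p)$, there exists a segment from $p$ to $\exp_p(t_n v)$ distinct from $s \mapsto \exp_p(s t_n v)$, with initial velocity $w_n \in T_p M$ satisfying $\exp_p(w_n) = \exp_p(t_n v)$ and $|w_n| \leqslant t_n |v|$. By compactness of closed balls in $T_p M$, a subsequence of $(w_n)$ converges to some $w$; continuity of $\exp_p$ gives $\exp_p(w) = \exp_p(v)$, while lower semicontinuity of length and the fact that $v$ itself is a segment force $|w| = |v|$. Two cases arise: if $w \neq v$, then $w \in \seg(p)$ furnishes a second segment to $\exp_p(v)$, which is the first alternative. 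If $w = v$, then $D\exp_p$ must be singular at $v$, for otherwise the inverse function theorem would provide a neighborhood of $v$ on which $\exp_p$ is a diffeomorphism, forcing $w_n = t_n v$ for $n$ large and contradicting $t_n v \notin \seg(p)$.

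The main obstacle is the compactness and case analysis in step (3); the subtlety is ensuring that when the competing segments collapse ($w = v$), singularity of $D\exp_p$ is the only remaining mechanism, which requires a careful local inversion argument together with the characterization of segments as curves realizing the distance.
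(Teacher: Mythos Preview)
The paper does not actually supply a proof of this proposition: it is stated as background material on the regularity of distance functions, with a blanket reference to Chavel's book \cite[Part \S III.2]{Chavel1993} for details. Your argument is the standard textbook proof (corner-rounding for (1), non-minimality past a conjugate point for (2), and the limiting-segment dichotomy for (3)), and it is correct. One small point worth tightening in (3): since $t_n v \notin \seg(p)$, the geodesic $s \mapsto \exp_p(s t_n v)$ is strictly longer than the distance, so the competing segment satisfies $|w_n| < t_n|v|$ rather than merely $|w_n| \leqslant t_n|v|$; this is what forces $w_n \neq t_n v$ and makes the final contradiction via local injectivity of $\exp_p$ clean.
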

 The cut locus of $p$ in the tangent space is defined to be the set $\seg(p) \backslash \seg^0(p$. 
The proposition states that it is the set of all $v \in T_{p}M$ such that the curve $\gamma: [0,1]\ni t\longmapsto\exp _{p}(tv)\in M$ is a segment   but becomes ineffectual whenever $t>1$.

Then it is natural to define the cut locus of $p$ in $M$ as the image of the cut locus of $p$ in the tangent space under the exponential map at $p$. This leads to the next corollary  that links the cut locus of $p$ in $M$,  the points in the manifold where the geodesics starting at $p$ stop being minimizing and the lack of smoothness of the distance function from $p$.
\begin{cor}
Let $c :[0,1]\longrightarrow M$ be a geodesic such that $c(0)= p$ and $\dot{c}(0)=v$. Let
$$\cut\left(v\right)= \sup\{ t;~ c\mid_{[0,t]} \text{is a segment}\},$$
then the distance function from $p$ satisfies the following
\begin{enumerate}
\item for every  $ t < \cut(v) $, $dist(p,\cdot)$ is smooth at $c(t)$,
\item $dist(p,\cdot)$ is not smooth at $ c(\cut(v))$.
\end{enumerate}
 In particular, irregularity of $\dist_p$ at some critical point $x$ is due to $\exp_p:\seg(p)\longrightarrow M$ either failing to be one-to-one at $x$ or having $x$ as a critical value.
 \end{cor}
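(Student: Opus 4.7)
\medskip

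\noindent\textbf{Proof plan.} My plan is to read off both assertions directly from Proposition~\ref{Chardist}, which already gives a clean classification of points in $\seg(p)$ versus $\seg^0(p)$. The two parts correspond respectively to the cases $tv \in \seg^0(p)$ (for $t<\cut(v)$) and $\cut(v)\,v \in \seg(p)\setminus \seg^0(p)$, so the strategy is to transport analytic smoothness/non-smoothness of $\dist(p,\cdot)$ through the exponential chart.

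\smallskip

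\noindent\emph{Part (1).} For $t<\cut(v)$, the definition of $\cut(v)$ gives some $\epsilon>0$ such that $c|_{[0,t+\epsilon]}$ is still a segment, hence $tv\in \seg^0(p)$. By items (1)--(2) of Proposition~\ref{Chardist}, $\exp_p$ is injective at $tv$ and $D\exp_p$ is non-singular there, so the inverse function theorem provides a neighbourhood $\tilde U\subset T_pM$ of $tv$ and a neighbourhood $U\subset M$ of $c(t)$ such that $\exp_p:\tilde U\to U$ is a diffeomorphism. The uniqueness of minimising segments then gives
\[
\dist(p,q)=\bigl|\exp_p^{-1}(q)\bigr|_{g_p}\qquad \text{for every } q\in U,
\]
and since $tv\neq 0$ the norm is smooth on $\tilde U$; composition with a diffeomorphism yields smoothness of $\dist(p,\cdot)$ at $c(t)$.

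\smallskip

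\noindent\emph{Part (2).} Set $x_0=c(\cut(v))$ and $v_0=\cut(v)\,v$. By item (3) of Proposition~\ref{Chardist} one of two alternatives holds, and I would handle each by contradiction, assuming $\dist(p,\cdot)$ is smooth on a neighbourhood of $x_0$. In the first case there is $v'\in\seg(p)$ with $v'\neq v_0$ and $\exp_p(v')=x_0$; smoothness of $\dist_p$ together with $|\grad \dist_p|\equiv 1$ on its smooth locus forces, via the first variation formula, that $\grad \dist_p(x_0)$ coincides with the terminal velocity of \emph{any} minimising geodesic arriving at $x_0$. Applied to both segments parametrised by $v_0$ and $v'$ this would imply $v_0=v'$, contradicting the hypothesis. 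In the second case $D\exp_p$ is singular at $v_0$, i.e.\ $v_0$ is a conjugate point along $c$, and the standard Jacobi field argument (second variation of arclength beyond a conjugate point) shows that $c$ ceases to be minimising beyond $\cut(v)$, so $\dist(p,c(\cut(v)+s))<\cut(v)+s$ for small $s>0$, whereas $\dist(p,c(t))=t$ for $t\le\cut(v)$. Composing with the smooth curve $c$, the function $t\mapsto\dist(p,c(t))$ would then have left derivative $1$ and right derivative strictly less than $1$ at $\cut(v)$, contradicting smoothness of $\dist(p,\cdot)$ at $x_0$.

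\smallskip

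\noindent The ``in particular'' sentence is an immediate reformulation of these two cases. The main obstacle I anticipate is the second case, which really leans on the Jacobi field/second variation machinery; if one wants to avoid invoking it, a more careful argument using Gauss' lemma and the fact that $\exp_p$ fails to be a local diffeomorphism at a conjugate point (so that $\dist_p\circ\exp_p$ cannot be smoothly invertible on a one-sided neighbourhood of $v_0$) can be substituted, but either way this is the technical heart of the statement.
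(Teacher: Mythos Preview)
The paper states this corollary without proof, treating it (like Proposition~\ref{Chardist} itself) as standard material drawn from Chavel, so there is no argument in the paper to compare against. Your strategy of reading both parts off Proposition~\ref{Chardist} is the natural one, and Part~(1) together with the first case of Part~(2) are handled correctly.

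The gap is in the second case of Part~(2). From $\dist(p,c(\cut(v)+s))<\cut(v)+s$ for $s>0$ you assert that the right derivative of $t\mapsto\dist(p,c(t))$ at $\cut(v)$ is \emph{strictly} less than~$1$; this does not follow. For instance $t\mapsto t-(t-\cut(v))^2$ is smooth, equals $t$ on the left, is strictly below $t$ on the right, yet has derivative $1$ at $\cut(v)$. Observe also that non-minimality of $c$ past $\cut(v)$ is simply the \emph{definition} of $\cut(v)$, so your Jacobi/second-variation input is never actually used and the conjugate-point hypothesis plays no role in the contradiction you wrote down. The clean repair unifies both cases: assuming $\dist_p$ smooth near $x_0$, one has $|\grad\dist_p|\equiv 1$ there, and the left-derivative computation forces $\grad\dist_p(x_0)=\dot c(\cut(v))$; since integral curves of $\grad\dist_p$ are geodesics along which $\dist_p$ increases linearly (differentiate $|\grad\dist_p|^2=1$ to obtain $\grad_{\grad\dist_p}\grad\dist_p=0$), uniqueness of geodesics identifies this integral curve with $c$ and yields $\dist_p(c(\cut(v)+s))=\cut(v)+s$ for small $s>0$, contradicting the definition of $\cut(v)$.
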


The smallest distance from $p$ to the cut locus is called the injectivity radius of the exponential map at $p$, denoted $\inj_p$ and corresponds to the largest radius $r$ for
which $\exp_p:T_pM \supset B(0,r) \longrightarrow M$ is a diffeomorphism.

This leads to a natural  definition of cut locus for a submanifold $\Sigma$ of $M$ using the normal exponential map.
The normal exponential map is the map $\exp_\n :(0,\infty)\times \Sigma\ni(t,p)\longmapsto \exp_p t\n(p) \in M$, where $\n(p)$ is unit normal
vector to $\Sigma$ at $p$. 

Let $\n(\Sigma)$ denote the total space of normal bundles of $\Sigma$. If $\Sigma$ is a single point $p$ in $M$, then $\n(\Sigma)$ is the tangent vector space $T_pM$ of $M$ at $p$. If we denote $\n_t(\Sigma):=\{v\in\n(\Sigma): ||v||<t \}$ where $t>0$, then the normal exponential reads $\exp_\n :\n(\Sigma)\longrightarrow M$  and is nothing but the restriction of the exponential $\exp: TM\longrightarrow M$ to $\n(\Sigma)$.
 The global injectivity radius $\inj_p$ of $\exp_\n $ is defined to the supremum over all $t>0$ such that $\exp_\n \mid_{\n_t(\Sigma)}$ is an embedding.

 The injectivity radius of the distance to the boundary is often called the rolling radius of $M$. We denote it throughout the paper by $\roll(M)$. Point \ref{p3} of Proposition \ref{Chardist} emphasises that, at any $p\in \Gamma_r$, the eigenvalues of the Hessian of $ d_\Gamma$ $\hess d_\Gamma$ are
$$\{0,-\kappa_1,- \kappa_2, \ldots, -\kappa_n\}, $$
 where $\kappa_1, \kappa_2, \ldots, \kappa_n$ are the eigenvalues of the shape operator $S:\T\Gamma_r\ni X\longmapsto \grad_X\n\in \T\Gamma_r$
with  $\n=-\grad d_\Gamma$ corresponding to the outward unit normal vector field to $M_r$.
\subsection{Radial curvature equation} 
Let $(M,g)$ be a complete Riemannian manifold and $r$ a smooth distance function on an open subset of $M$. A Jacobi field for $r$ is a smooth vector field $J$ that does not depend on $r$. In other words, it satisfies the Jacobi equation
 \begin{equation}\label{Jacobi}
 [J,\grad r]=0.
 \end{equation}
 A particularly interesting fact about these Jacobi fields is that they can be used to compute the Hessian of the function $r$. 
 Let $J$ be a Jacobi field for $r$, then one has
 \begin{equation}\label{JacobieShapeOp}
 \grad_{\grad r}J=\grad_J \grad r=\grad_J  N=S(J).
 \end{equation}
 Let $R$ denote the Riemannian curvature tensor defined by 
 $$R(X,Y)Z=[\grad_X,\grad_Y]Z-\grad_{[X,Y]}Z$$
  for every vector fields $X$, $Y$ and $Z$.
From the above equalities \eqref{Jacobi} and \eqref{JacobieShapeOp} we get the following equations.
\begin{thm}
Let $s\in \R_{\geqslant 0}$ such that $\Gamma_s\subset r^{-1}(s)$ consists of regular points for $r$. Then we have:
 \begin{equation}
 \label{RiccatiS}
 \grad_{\grad f} S+S^2+R_{\grad f}=0
 \end{equation}and
 \begin{equation}\label{RiccatiH}
(\grad_{\grad_r}\hess r)(X,Y)+\hess^2r(X,Y)+R(X,\grad_r,\grad_r,Y)=0,
\end{equation}
 for every $X, Y$ in $T\Gamma_s$. Here $S^2$ corresponds to  $S\circ S$, $R_{\grad f}:=R(\cdot, \grad f)\grad f$ is the directional curvature operator (also called tidal force operator). The curvature tensor is changed  to a (0, 4)-tensor as follows:
$$R(X,Y,Z,W)= g(R(X,Y)Z,W).$$
\end{thm}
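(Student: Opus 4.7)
The plan is to derive the Riccati equation \eqref{RiccatiS} first, and then translate it into the Hessian form \eqref{RiccatiH}. I would set $N := \grad r$, which is a unit vector field since $r$ is a distance function. Differentiating $g(N,N)\equiv 1$ in any direction yields $g(\grad_X N, N)=0$; in particular $\grad_N N = 0$, i.e.\ the integral curves of $N$ are unit-speed geodesics. The ingredients I will combine are (a) the Jacobi relation \eqref{Jacobi}, which gives $[N,J]=0$ for every Jacobi field $J$ for $r$, and (b) its consequence \eqref{JacobieShapeOp}, namely $\grad_N J = S(J)$.

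The core computation is to evaluate $\grad_N\bigl(S(J)\bigr)$ two different ways for a Jacobi field $J$. On one hand, the product rule for the $(1,1)$-tensor $S$ gives
\[
\grad_N (S(J)) = (\grad_N S)(J) + S(\grad_N J) = (\grad_N S)(J) + S^2(J),
\]
using \eqref{JacobieShapeOp} in the last step. On the other hand, $S(J)=\grad_J N$, and the defining identity $R(N,J)N = \grad_N\grad_J N - \grad_J\grad_N N - \grad_{[N,J]}N$, combined with $\grad_N N=0$ and $[N,J]=0$, collapses to
\[
\grad_N\grad_J N = R(N,J)N = -R(J,N)N = -R_{\grad r}(J).
\]
Equating the two expressions gives $(\grad_N S)(J)+S^2(J)+R_{\grad r}(J)=0$ along the integral curve of $N$ through the base point.

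To promote this from Jacobi fields to a tensor identity on $T\Gamma_s$, I would observe that every vector $v\in T_p\Gamma_s$ can be extended to a Jacobi field in a neighborhood by transporting it along the flow of $N$ (equivalently, by solving $[J,N]=0$ with initial data $v$). Hence the identity holds pointwise for every $X\in T\Gamma_s$, which is \eqref{RiccatiS}. To pass to \eqref{RiccatiH}, I use $\hess r(X,Y)=g(S(X),Y)$ together with metric compatibility of $\grad$ so that $(\grad_N\hess r)(X,Y) = g((\grad_N S)(X),Y)$, and the symmetry of $S$ so that $g(S^2(X),Y)=\hess^2 r(X,Y)$ in the convention used. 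Taking $g(\cdot,Y)$ in \eqref{RiccatiS} then yields \eqref{RiccatiH} with the curvature term appearing as $g(R_{\grad r}(X),Y)=R(X,\grad r,\grad r,Y)$.

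The main obstacle is purely bookkeeping: keeping the sign conventions for the curvature tensor consistent and verifying that the extension of a tangent vector along $N$ indeed produces a genuine Jacobi field in the sense of \eqref{Jacobi} (so that \eqref{JacobieShapeOp} applies). Once these conventions are fixed the argument is a direct application of the definition of $R$ to the unit vector field $\grad r$.
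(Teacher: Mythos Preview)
Your proposal is correct and follows essentially the same route as the paper: both arguments establish $\grad_{\grad r}\grad r=0$, exploit a Jacobi field $J$ with $[J,\grad r]=0$ and $\grad_{\grad r}J=S(J)$, and read off the Riccati equation from the definition of the curvature tensor applied to $(\grad r,J,\grad r)$. The only cosmetic difference is that the paper packages the curvature computation via the second covariant derivative $\grad^2_{X,Y}$ whereas you use the Lie-bracket form directly; your explicit product-rule step $\grad_N(S(J))=(\grad_N S)(J)+S^2(J)$ and your passage to \eqref{RiccatiH} via $g(\cdot,Y)$ are in fact spelled out more carefully than in the paper.
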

\begin{proof}
Notice that, $\grad_{\grad r}\grad r=0$ since
$$0=X(g(\grad r,\grad r))=2(g(\grad_X\grad r,\grad r)=2g(\grad_{\grad r}\grad r,X), \quad \forall X\in \T\Gamma_s.$$
One has 
\begin{equation*}
R(J,\grad r)\grad r=\grad_{J,\grad r}^2\grad r-\grad_{\grad r,J}^2\grad r.
\end{equation*}
Together with
\begin{equation*}
\grad_{J,\grad r}^2\grad r=\grad_J(\grad_{\grad r}\grad r)-\grad_{\grad_J\grad r}\grad r=-S\big(S(J)\big)
\end{equation*}
and 
\begin{equation*}
\grad_{\grad r,J}^2\grad r=\grad_{\grad r}(\grad_J\grad r)-\grad_{\grad_{\grad r}J}\grad r=\grad_{\grad r}\big(S(J)\big),
\end{equation*}
we get the first equality.
The second formula follows from the last point of Proposition \ref{prop2101}.
\end{proof}

\subsection{Application to the Rayleigh quotient for Wentzel eigenvalues}
Let $(M,g)$ be a smooth compact $n$-dimensional Riemannian manifold with non-empty boundary $\Gamma$. Let $d_\Gamma$ denote the distance function to $\Gamma$ as in Proposition \ref{prop321} and $\roll(M)$ the rolling radius of $M$, i.e the injectivity radius of $d_\Gamma$. Every $h\in(0,\roll(M))$, defines a so-called tubular neighbourhood of $\Gamma$:
 $$M_{h}:=\{p\in M: d_\Gamma(p)< h\}.$$
 It is the subset of $M$ bounded by $\Gamma$ itself and the level hypersurface $\Gamma_h:=\{p\in M: d_\Gamma(p)= h\}$. From $d_\Gamma$ one can define a distance function on $M_H$ from each level hypersurface $\Gamma_h$ by $$d_h(p):=h-d_\Gamma(p).$$
Consider the modified distance function $\eta:=\frac{1}{2}(d_h)^2$, we have the following decomposition of the Rayleigh quotient for harmonic functions of $M$. 
\begin{prop}\label{PohWentzel}
Let $u$ be a smooth harmonic function on $M$. Then, for every $h\in(0,\roll(M))$, one has
\begin{align}\nonumber
R_\beta(u)
&=\int_M{ |\grad u|^2 \mathrm{d}_M}\\
&\quad\quad-\frac{\beta}{h}\int_{M_h}{ |\grad u|^2\Delta \eta+2 \hess \eta(\grad u,\grad u)\mathrm{d}_M}\\
&\qquad\qquad+\beta\int_\Gamma (\partial_\n u)^2\mathrm{d}_\Gamma.
\end{align}
\end{prop}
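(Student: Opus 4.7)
The plan is to convert the boundary term $\beta\int_\Gamma |\grad_\Gamma u|^2$ appearing in the numerator of $R_\beta(u)$ into a volume integral over the tubular neighbourhood $M_h$ via a Rellich/Pohozaev-type identity with the test vector field $\grad\eta$. The leftover boundary contribution will then yield the stated $\beta\int_\Gamma(\partial_\n u)^2$ term.

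As a first reduction I would split $|\grad u|^2 = |\grad_\Gamma u|^2 + (\partial_\n u)^2$ on $\Gamma$, so that it suffices to prove
\[
\int_\Gamma |\grad_\Gamma u|^2\,\mathrm{d}_\Gamma = -\frac{1}{h}\int_{M_h}\bigl(|\grad u|^2 \Delta\eta + 2\hess\eta(\grad u,\grad u)\bigr)\mathrm{d}_M + \int_\Gamma (\partial_\n u)^2\,\mathrm{d}_\Gamma.
\]
The choice of $\eta = \tfrac12 d_h^2$ in place of $d_h$ is essential: $\grad\eta = d_h\,\grad d_h$ vanishes on $\Gamma_h$, whereas on $\Gamma$ we have $d_h = h$ and $\grad\eta = -h\grad d_\Gamma = h\n$. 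Thus an integration by parts over $M_h$ receives a contribution only from $\Gamma$.

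The analytical core is the divergence identity
\[
\div\bigl(|\grad u|^2\grad\eta - 2\langle\grad u,\grad\eta\rangle\grad u\bigr) = -|\grad u|^2\Delta\eta - 2\hess\eta(\grad u,\grad u),
\]
valid on $M_h$ for any harmonic $u$. This is a direct product-rule computation: the first divergence expands to $|\grad u|^2\div(\grad\eta) + 2\hess u(\grad\eta,\grad u)$, and, because $\grad\eta$ is itself a gradient (so that its covariant derivative is the symmetric $(0,2)$-tensor $\hess\eta$), the second expands to $2\hess u(\grad\eta,\grad u) + 2\hess\eta(\grad u,\grad u)$ after using $\div(\grad u) = 0$. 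The two cross terms $2\hess u(\grad\eta,\grad u)$ cancel, and the overall minus sign is forced by the paper's convention $\Delta = -\div\nabla$.

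It then remains to integrate this identity over $M_h$ and apply the divergence theorem. The $\Gamma_h$-boundary contribution vanishes because $\grad\eta = 0$ there; on $\Gamma$, using $\grad\eta = h\n$ and $\langle\grad u,\grad\eta\rangle = h\partial_\n u$, the boundary integrand equals $h|\grad u|^2 - 2h(\partial_\n u)^2$. Dividing through by $h$ and substituting into the first-step reduction produces the claimed formula. The main source of error here is the bookkeeping of signs: $\grad d_\Gamma$ points inward, so the outward unit normal to $M_h$ at $\Gamma$ is $\n = -\grad d_\Gamma$, and the paper's Laplacian $\Delta$ differs from $\div\nabla$ by a sign—both conventions have to be tracked consistently throughout.
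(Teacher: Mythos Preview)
Your proposal is correct and follows essentially the same Pohozaev--Rellich strategy as the paper: both arguments hinge on the divergence of the vector field $|\grad u|^2\grad\eta - 2\langle\grad u,\grad\eta\rangle\grad u$ over $M_h$, exploiting that $\grad\eta$ vanishes on $\Gamma_h$ and equals $h\n$ on $\Gamma$. The only difference is organizational---the paper splits this into two successive applications of the divergence theorem (first to $|\grad u|^2\grad\delta$, then to $g(\grad\delta,\grad u)\grad u$), whereas you package the identity as a single divergence from the outset; your version is slightly cleaner but mathematically identical.
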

\begin{proof}
$R_\beta(u)=\int_M{ |\grad u|^2 \mathrm{d}_M}+\beta \int_\Gamma { |\grad_\Gamma u|^2 \mathrm{d}_\Gamma}$.
Notice that, on $\Gamma$ one has, $|\grad u|^2=|\grad_\Gamma u|^2+(\partial_\n u)^2$ and $\grad\delta=\n$. Then applying the divergence theorem, we get
\begin{align*}
\int_\Gamma |\grad u|^2  \mathrm{d}_\Gamma&=\int_{M_h}{ \div(|\grad u|^2\grad\delta) \mathrm{d}_M}\\
&=\int_{M_h}{ D_{\grad \delta} (|\grad u|^2)+|\grad u|^2\div(\grad \delta) \mathrm{d}_M}\\
&=\int_{M_h}{ 2 g(\grad_{\grad \delta}\grad u,\grad u)+|\grad u|^2\div(\grad \delta) \mathrm{d}_M}
\end{align*}
and \begin{align}\label{eq1}
R_\beta(u)=&\int_M{ |\grad u|^2 \mathrm{d}_M}-\beta \int_\Gamma { (\partial_\n u)^2 \mathrm{d}_\Gamma}\nonumber\\
&+\frac{\beta}{h}\int_{M_h}{ 2 \hess u(\grad\delta,\grad u)+|\grad u|^2\div(\grad \delta) \mathrm{d}_M}.
\end{align}
Moreover, we have
 $$\hess u(\grad\delta,\grad u)=div(g(\grad\delta,\grad u)\grad u)-\hess \delta(\grad u,\grad u).$$ Indeed,
\begin{align*}
div(g(\grad\delta,\grad u)\grad u)&=D_{\grad u}g(\grad\delta,\grad u)+g(\grad\delta,\grad u)\div(\grad u)\\
&=g(\grad_{\grad u}\grad\delta,\grad u)+g(\grad\delta,\grad_{\grad u}\grad u)-g(\grad\delta,\grad u)\Delta u\\
&=\hess\delta(\grad u,\grad u)+\hess u(\grad \delta,\grad u).
\end{align*}
Hence, replacing in \eqref{eq1}, one has
\begin{align*}
R_\beta(u)
&=\int_M{ |\grad u|^2 \mathrm{d}_M}-\beta \int_\Gamma { (\partial_\n u)^2 \mathrm{d}_\Gamma}\\
&+\frac{\beta}{h}\int_{M_h}{ 2 \left[ div(g(\grad\delta,\grad u)\grad u)-\hess \delta(\grad u,\grad u)\right]+|\grad u|^2\div(\grad \delta) \mathrm{d}_M}\\
&=\int_M{ |\grad u|^2 \mathrm{d}_M}+\beta\int_\Gamma (\partial_\n u)^2  \mathrm{d}_\Gamma\\
&\quad-\frac{\beta}{h}\int_{M_h}{ |\grad u|^2\Delta \delta+2 \hess \delta(\grad u,\grad u)  \mathrm{d}_M},
\end{align*}
where the last line is obtained after integrating by part and combining terms in $\int_\Gamma { (\partial_\n u)^2 \mathrm{d}_\Gamma}$.
\qedhere
\end{proof}

\section{ Comparison estimates}
As already said, the estimate in our main theorem follows from a geometric Hessian comparison theorem. The idea is to compare a geometric quantity on a Riemannian manifold with the corresponding quantity on a model space. 
%
%
%
%
 \subsection{Constant curvature specifics}
 If $S$ is a surface with constant mean curvature $K$, then 
 \begin{enumerate}
 \item $S$ is isometric to a sphere of radius $a$  if $K=\frac{1}{a^2}$,
 \item $S$ is isometric to a plane if $K=0$,
 \item $S$ is isometric to a pseudo-sphere determined by $a$, if $K=-\frac{1}{a^2}$.
\end{enumerate}  
 We denote the hyperbolic space of curvature $\kappa<0$, Euclidean space $(\kappa=0)$, or the sphere of curvature $\kappa>0$ either  jointly  by $M_k^m$, or, if the sign of $\kappa$ is specified, by $\mathbb{H}^n_\kappa$, $\R^n$, $\mathbb{S}_k^n$:
\begin{defn}[Model spaces.]
Consider $\R^n$ endowed  with the euclidean metric. For any $R>0$,
$\mathbb{S}^n(R) := \{p\in \R^{n+1},~ |p| = R\}$ denotes the metric sphere of radius $R$ endowed with the induced  Euclidean metric from $R^{n+1}$. As well,
denote by $\mathbb{H}^n(R):= \{(p_0, \ldots, p_n) \in\R^{n+1},~ -p_0^2+ p_1^2+ \ldots + p_n^2 =-R^2\}$ the hyperbolic space with the restriction of the Minkowski metric of $\R^{n+1}$.
A $n$-dimensional model space $M_\kappa^n$ is a Riemannian manifold   with constant curvature $\kappa$, for some $\kappa\in\R$. We think of $M_\kappa^n$ as
\begin{equation*}
M_\kappa^n:=
\begin{cases}
\mathbb{S}^n_{\frac{1}{\sqrt{\kappa}}}:= \mathbb{S}^n(\frac{1}{\sqrt{\kappa}})\quad & \text{if }\kappa>0,\\
\R^n\quad &  \text{if }\kappa=0,\\
\mathbb{H}_{\frac{1}{\sqrt{-\kappa}}}^n:=\mathbb{H}^n(\kappa)\quad & \text{if }\kappa<0,
\end{cases}
\end{equation*}
since any complete simply connected $n$-dimensional Riemannian manifold of constant curvature  is isometric to one of the above model spaces.
\end{defn}

Let $\sigma$ be a compact smooth hypersurface of a model space $M$, a point $p$ of $\Sigma$ is umbilical if the principal
curvatures of $\Sigma$ at $p$ are equal and $\Sigma$ is called umbilical if every point is umbilical.
Assuming that $M=M_\kappa^m$ is the model space of constant sectional curvature $K$ with umbilical boundary having principal curvature $\kappa$, the shape operator of a parallel  hypersurfaces
 satisfies
$$
\begin{cases}
a'+a^2+K=0\\
a(0)=-\kappa.
\end{cases}
$$ 
The idea is to examine the situation in $M$ comparing with the case of described model space.

\subsection{ Riccati comparison}\label{SectionRiccatiComp}
We start with a general result for differential inequalities.
In the last section we discussed the Riccati equation as an equation of a field of endomorphisms on $T (c^\perp)$ along a curve $c$. Now we discuss the corresponding one-dimensional ODE of the same type.
\begin{defn}
 Let $\kappa\in\R$ and let $u : I\longrightarrow\R$ be a smooth function on the interval $I \subset \R$.
  Then $u$ is a solution of 
 \begin{enumerate}
 \item the Riccati inequality if
$$u'+ u^2\leqslant -\kappa.$$
\item the Riccati equation if
$$u' +u^2+\kappa=0,$$
\item the Jacobi equation if
$$u''+ \kappa u = 0.$$
 \end{enumerate}
It is a maximal solution to these differential equations if it is a solution defined on the interval $I$ such that there is no solution defined on a suitable interval $I'$, which properly contains $I$. 
\end{defn}
As (in)equations of vector fields, the study of this (in)equations  is a classic topic in
differential geometry.
For our comparison theory we require specific solutions to the Jacobi equation, which will be usable
all  over  the text. We also gather some useful properties.
\begin{defn}
We define $sn_\kappa$ as the unique solution of the Jacobi equation satisfying
$$sn_\kappa(0) = 0, \qquad sn'_\kappa(0)=1$$
and by $cs_\kappa$ as the unique solution of the Jacobi equation satisfying
$$cs_\kappa(0) = 1,\qquad cs'_\kappa(0) = 0.$$
\end{defn}
\begin{lem}[Properties of $sn_\kappa$ and $cs_\kappa$.] \label{Proprties2701}
For any $\kappa\in\R$, the following holds.
\begin{enumerate}
\item \label{item27011} The solutions are explicitly given by $sn_\kappa, cs_\kappa :\R\longrightarrow\R$
\begin{equation*}
sn_\kappa(t)=
\begin{cases}
\frac{1}{\sqrt{\kappa}}\sin(\sqrt{\kappa}t)\quad &  \kappa>0\\
t \quad &  \kappa=0\\
\frac{1}{\sqrt{-\kappa}}\sinh(\sqrt{-\kappa}t)\quad &  \kappa<0
\end{cases}
\quad
cs_\kappa(t)=
\begin{cases}
\cos(\sqrt{\kappa}t)\quad & if \kappa>0\\
1 \quad & if \kappa=0\\
\cosh(\sqrt{-\kappa}t)\quad & if \kappa<0.
\end{cases}
\end{equation*}

\item\label{item27012} Define
\begin{equation*}
R_\kappa:=
\begin{cases}
\frac{\pi}{\sqrt{\kappa}}\quad &  \kappa>0\\
\infty \quad &  \kappa\leqslant 0
\end{cases}
\quad \text{and} \quad
L_\kappa:=
\begin{cases}
\frac{\pi}{2\sqrt{\kappa}}\quad &  \kappa>0\\
\infty \quad &  \kappa\leqslant 0.
\end{cases}
\end{equation*}
Then, we have
$$sn_\kappa(t)>0 \text{ for all } t \in (0,R_\kappa) \quad,\quad  cs_\kappa(t) > 0 \text{ for all } t \in (0,L_\kappa ).$$
\item \label{item27013} for every $t\in \R$
$$ sn_\kappa(-t) = -sn_\kappa(t)\quad\text{ and }\quad cs_\kappa(-t) = cs_\kappa(t),$$
$$ 1=cs_\kappa^2(t)+\kappa sn_\kappa^2(t).$$
\item \label{item27014} These functions satisfy
$$sn_\kappa'=cs_\kappa\quad\text{ and }\quad cs'_\kappa= -\kappa sn_\kappa.$$
\end{enumerate}
\end{lem}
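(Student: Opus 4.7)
The proof is essentially a direct verification, so the plan is to dispatch part \eqref{item27011} first by constructing the candidate solutions explicitly and then to deduce the remaining items either from those formulas or, where cleaner, from the uniqueness of solutions to the second-order linear ODE $u''+\kappa u=0$.

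For \eqref{item27011}, I would treat the three cases separately. When $\kappa>0$, setting $u(t)=\kappa^{-1/2}\sin(\sqrt\kappa\,t)$ gives $u''(t)=-\sqrt\kappa\sin(\sqrt\kappa\,t)=-\kappa u(t)$, with $u(0)=0$ and $u'(0)=\cos(0)=1$, so by the Picard--Lindel\"of uniqueness theorem $u=sn_\kappa$; similarly $\cos(\sqrt\kappa\,t)$ satisfies the equation with $\cos(0)=1$ and derivative $0$ at the origin, giving $cs_\kappa$. The $\kappa=0$ case is trivial since the equation reduces to $u''=0$. For $\kappa<0$ one replaces sine and cosine by $\sinh$ and $\cosh$, using that $\tfrac{d^2}{dt^2}\sinh(\sqrt{-\kappa}\,t)=-\kappa\sinh(\sqrt{-\kappa}\,t)$ and the analogous identity for $\cosh$.

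With \eqref{item27011} in hand, \eqref{item27014} is immediate by differentiating the closed-form expressions term by term in each regime (using $(\sqrt\kappa)^2=\kappa$ when $\kappa>0$ and $(\sqrt{-\kappa})^2=-\kappa$ when $\kappa<0$). Part \eqref{item27013} then splits into two halves: the parity statements $sn_\kappa(-t)=-sn_\kappa(t)$ and $cs_\kappa(-t)=cs_\kappa(t)$ are visible directly from the formulas (since sine/sinh/identity are odd and cosine/cosh/constant-$1$ are even), while the Pythagorean-type identity $cs_\kappa^2(t)+\kappa\, sn_\kappa^2(t)=1$ is best obtained from \eqref{item27014} by observing
\begin{equation*}
\frac{d}{dt}\bigl(cs_\kappa^2+\kappa\, sn_\kappa^2\bigr)=2\,cs_\kappa\cdot(-\kappa sn_\kappa)+2\kappa\, sn_\kappa\cdot cs_\kappa=0,
\end{equation*}
so the expression is constant and equals its value $cs_\kappa(0)^2+\kappa\, sn_\kappa(0)^2=1$ at $t=0$.

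Finally, \eqref{item27012} follows from \eqref{item27011} and elementary positivity of the standard trigonometric and hyperbolic functions: $\sin(\sqrt\kappa\,t)>0$ on $(0,\pi/\sqrt\kappa)=(0,R_\kappa)$ and $\cos(\sqrt\kappa\,t)>0$ on $(0,\pi/(2\sqrt\kappa))=(0,L_\kappa)$ when $\kappa>0$; when $\kappa\leqslant 0$ the functions $t$, $\sinh(\sqrt{-\kappa}\,t)$, $1$, and $\cosh(\sqrt{-\kappa}\,t)$ are positive on all of $(0,\infty)=(0,R_\kappa)=(0,L_\kappa)$. There is no genuine obstacle in this proof; the only care required is the uniform case analysis on the sign of $\kappa$ and the consistency of the $\kappa=0$ limits, so I would structure the write-up around a single case split at the start of \eqref{item27011} and then invoke the resulting formulas throughout the rest of the items.
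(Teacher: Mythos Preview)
Your proof is correct and follows essentially the same direct-verification approach as the paper. The only minor tactical difference is in part \eqref{item27014}: you differentiate the explicit formulas case by case, whereas the paper observes that $sn'_\kappa$ and $cs'_\kappa$ again satisfy the Jacobi equation with the initial data of $cs_\kappa$ and $-\kappa\, sn_\kappa$ respectively and invokes uniqueness, thereby avoiding a second case split on the sign of $\kappa$.
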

\begin{proof}
\eqref{item27011} It suffices to check that these functions solve the respective initial value problems.
The points \eqref{item27012} and \eqref{item27013} follow immediately from \eqref{item27011}.
For \eqref{item27014}, by computation we have
\begin{equation*}
\begin{cases}
(sn'_\kappa)''+\kappa sn'_\kappa=(sn''_\kappa)'+\kappa sn'_\kappa=0\\
sn'_\kappa(0)=1=cs_\kappa(0)\\
sn''_\kappa(0)=-\kappa sn_\kappa(0)=0=cs'_{\kappa}(0),
\end{cases}
\end{equation*}
and
\begin{equation*}
\begin{cases}
(cs'_\kappa)''+\kappa cs'_\kappa=(sn''_\kappa)'+\kappa cs'_\kappa=0\\
cs'_\kappa(0)=0=-\kappa sn_\kappa(0)\\
cs''_\kappa(0)=-\kappa cs_\kappa(0)=-\kappa=-\kappa sn'_{\kappa}(0).
\end{cases}
\end{equation*}
The result follows from the uniqueness of solutions of initial value problems.
\qedhere
\end{proof}

\begin{prop}[Riccati comparison principle ]\label{riccacomp}

Let $\rho_1$ and $\rho_2$ be two smooth functions $\rho_{1,2}:(0,b)\longrightarrow \R$ such that $\rho_i(t)=\frac{1}{t}+O(t)$,   $i=1,2$.
If $\rho_1$ and $\rho_2$ satisfy
$$\dot{\rho}_1+\rho^2_1 \leqslant\dot{\rho}_2+\rho^2_2,$$
then
$$\rho_1\leqslant\rho_2,$$
(cf. \cite[(1.6.1)]{Karcher1989}). We note $f(x)=O(g(x)){\text{ as }}x\to 0$
if and only if there exist positive numbers $\delta$ and $M$ such that
$$|f(x)|\leqslant Mg(x){\text{ when }}0<x<\delta.$$
\end{prop}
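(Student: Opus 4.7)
The natural strategy is to linearise the Riccati inequality via the substitution $u := \rho_2 - \rho_1$ and then use an integrating factor, the only delicate point being the degenerate behaviour of $\rho_1 + \rho_2$ at $t=0$, which is exactly balanced by the vanishing of $u$ at $0$ dictated by the hypothesis $\rho_i(t) = 1/t + O(t)$.

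First I would subtract the two Riccati expressions. From $\dot\rho_1 + \rho_1^2 \leqslant \dot\rho_2 + \rho_2^2$ one gets
\begin{equation*}
\dot u = \dot\rho_2 - \dot\rho_1 \geqslant \rho_1^2 - \rho_2^2 = -(\rho_1+\rho_2)\,u,
\end{equation*}
so $u$ satisfies the linear first-order differential inequality $\dot u + (\rho_1+\rho_2)\,u \geqslant 0$ on $(0,b)$. Note that under the hypothesis $\rho_i(t) = 1/t + O(t)$, the difference satisfies $u(t) = O(t)$ as $t\to 0^+$, so in particular $\lim_{t\to 0^+} u(t) = 0$.

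Next I fix some $t_1 \in (0,b)$ and introduce the integrating factor
\begin{equation*}
\phi(t) := \exp\!\left(\int_{t_1}^{t}\bigl(\rho_1(s)+\rho_2(s)\bigr)\,ds\right), \qquad t\in(0,b).
\end{equation*}
Since $\phi>0$, the inequality $\dot u + (\rho_1+\rho_2)\,u \geqslant 0$ is equivalent to $(u\phi)'(t) \geqslant 0$, so $u\phi$ is non-decreasing on $(0,b)$. The problem thus reduces to showing that $\lim_{t\to 0^+} u(t)\phi(t) = 0$, for then a non-decreasing function vanishing at the left endpoint is non-negative throughout, giving $u\geqslant 0$ and hence the claim $\rho_1\leqslant\rho_2$.

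The remaining step is a short asymptotic estimate, which I expect to be the main (and really only) obstacle: one must show that the apparent singularity of $\phi$ at $0$ is of the right order to be dominated by the vanishing of $u$. Using $\rho_1(s)+\rho_2(s) = 2/s + O(s)$ near $0$, the integral in the exponent equals $2\log t - 2\log t_1 + \int_{t_1}^{t} O(s)\,ds$, and the last term stays bounded as $t\to 0^+$. Consequently $\phi(t) = \Theta(t^2)$ as $t\to 0^+$, while $u(t) = O(t)$ from the hypothesis; together this yields $u(t)\phi(t) = O(t^3) \to 0$, closing the argument. This kind of weighted-derivative trick is precisely the one-dimensional analogue used by Karcher \cite{Karcher1989}, and it adapts verbatim to comparisons between solutions of $\dot\rho+\rho^2+\kappa_i=0$ with different curvature bounds $\kappa_i$, which is how the principle will later be applied to the shape operators of the level sets $\Gamma_s$.
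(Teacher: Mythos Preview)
Your argument is correct and is essentially the paper's proof in different clothing: the paper introduces $\Phi_i(t)=t\exp\bigl(\int_0^t(\rho_i(s)-1/s)\,ds\bigr)$ so that $\Phi_i'=\rho_i\Phi_i$, and shows that the Wronskian $\Phi_2'\Phi_1-\Phi_1'\Phi_2=(\rho_2-\rho_1)\Phi_1\Phi_2$ is non-decreasing and vanishes at $0$; since $\Phi_1\Phi_2$ is exactly (a constant multiple of) your integrating factor $\phi$, your product $u\phi$ and the paper's Wronskian are the same quantity, and both proofs hinge on the same asymptotic cancellation at $t=0$.
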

\begin{proof}
Define $\Phi_i(t)=t\exp(\int_0^t (\rho_i(s)-1/s)ds)$ on $[0,b)$. Then   $\Phi_i(0)=0$ and  $\frac{d}{dt}\left[ t\exp(\int_0^t (\rho_i(s)-1/s)ds)\right]$  exists at zero, $\Phi_i'(0)=1$. On $(0,b)$, $\Phi_i$ is smooth and strictly positive, differentiating gives 
\begin{align*}
\Phi'_i(t)&=\frac{d}{dt}\left[ t\exp(\int_0^t (\rho_i(s)-1/s)ds)\right] \\
	&=\frac{\Phi_i(t)}{t}+t\frac{d}{dt}\left[ \exp(\int_0^t (\rho_i(s)-1/s)ds)\right] \\
	&=\frac{\Phi_i(t)}{t}+t\frac{d}{dt}\left[ \int_0^t (\rho_i(s)-1/s)ds\right]\frac{\Phi_i(t)}{t} \\
	&=\rho_i(t) \Phi_i(t),
\end{align*}
and hence $\Phi''_i(t)=(\rho'_i(t)+\rho_i(t)^2)\Phi_i(t)$. It follows that 
$$\Phi_2'(t)\Phi_1(t)-\Phi_1'(t)\Phi_2(t)\geqslant\Phi_2'(0)\Phi_1(0)-\Phi_1'(0)\Phi_2(0)=0$$
 since the function $\Phi_2'(t)\Phi_1(t)-\Phi_1'(t)\Phi_2(t)$ is increasing:
$$(\Phi_2'\Phi_1-\Phi_1'\Phi_2)'=\Phi_2''\Phi_1-\Phi_1''\Phi_2=((\rho'_2+\rho_2^2)-(\rho'_1+\rho_1^2))\Phi_1\Phi_2\geq 0.$$
Computation of $\Phi_2'(t)\Phi_1(t)-\Phi_1'(t)\Phi_2(t)$ and the  previous inequality lead to $\rho_2(t)- \rho_1(t)\geq 0.$
\qedhere
\end{proof}

Here is the complete version of Proposition \ref{riccacomp}: 
\begin{thm}[Ricatti comparison principle]\label{RiccatiComparison}
For $i=1,2$, we  let $a_i:[0,m_{a_{i}})\longrightarrow \R$  be the maximal solution of the Riccati eqution $a_i'+a_i^2+R_i=0$.
$$\text{If }
 \begin{cases}
 a_1'+a_1^2\leqslant a_2'+a_2^2 ~(\text{i.e. }
 R_1\geqslant R_2) \\
 \text{ and }\\
a_1(0)\leqslant a_2(0)
 \end{cases}
\text{then }\quad
 \begin{cases}
 a_1(t)\leqslant a_2(t), t\in[0,m_{a_1})\\
  \text{ and }\\
 m_{a_{1}}\leqslant m_{a_{2}}.
 \end{cases}
$$
For a proof, we refer the reader to \cite[Prop 2.3]{Eschenburg1987}.
\end{thm}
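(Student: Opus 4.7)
The plan is to work on the common interval of existence $[0,T)$ with $T := \min(m_{a_1}, m_{a_2})$, compare $a_1$ and $a_2$ via an integrating-factor argument applied to $w := a_2 - a_1$, and then separately argue that the first blow-up cannot happen later for $a_1$ than for $a_2$.

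For the pointwise comparison, I would start from the hypothesis $a_1'+a_1^2 \leq a_2'+a_2^2$ to derive
\[
w' \;\geq\; a_1^2 - a_2^2 \;=\; -(a_1+a_2)\,w \qquad\text{on }[0,T),
\]
i.e.\ the linear differential inequality $w' + (a_1+a_2)\,w \geq 0$ with smooth coefficient on $[0,T)$. Multiplying by the positive integrating factor $\mu(t) := \exp\!\bigl(\int_0^t (a_1+a_2)\,ds\bigr)$ yields $(\mu w)' \geq 0$, and hence $\mu(t)\,w(t) \geq \mu(0)\,w(0) = a_2(0)-a_1(0) \geq 0$. Since $\mu>0$ this gives $a_1 \leq a_2$ throughout $[0,T)$. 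This is essentially the same mechanism as in the proof of Proposition~\ref{riccacomp}, but without the $\frac{1}{t}+O(t)$ singularity at $0$, so no normalization of the form $t\,e^{\int(\rho-1/s)\,ds}$ is needed.

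To deduce $m_{a_1} \leq m_{a_2}$, I would invoke the qualitative behaviour of the Riccati ODE $a' = -a^2 - R$: since $R$ is smooth on each compact subinterval, $-a^2$ dominates for large $|a|$, so a maximal solution can only cease to exist by diverging to $-\infty$ in finite time (it cannot run off to $+\infty$, because $a'$ would then be strongly negative). Supposing for contradiction that $m_{a_2} < m_{a_1}$, one has $T = m_{a_2}$, the previous step gives $a_1 \leq a_2$ on $[0, m_{a_2})$, and $a_1$ extends continuously to $[0, m_{a_2}]$, hence is bounded below by some constant $c$. Then $a_2 \geq c$ on $[0, m_{a_2})$, contradicting $a_2(t) \to -\infty$ as $t \to m_{a_2}^-$. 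The main obstacle is making this dichotomy precise without circularity; I would rely on an a priori estimate $a'(t) \leq -\tfrac{1}{2}a(t)^2$ valid once $|a|$ exceeds a threshold depending on $\sup|R|$ on a compact subinterval, together with the elementary fact that solutions of $v' = -\tfrac{1}{2}v^2$ blow up to $-\infty$ in finite time.
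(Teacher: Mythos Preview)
The paper does not supply a proof of this theorem; the statement itself ends with a reference to \cite[Prop~2.3]{Eschenburg1987}, so there is no in-paper argument to compare against.

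Your proof is correct. The integrating-factor step on $w=a_2-a_1$ is the natural simplification of the technique the paper uses for the related Proposition~\ref{riccacomp}: there the $\tfrac{1}{t}+O(t)$ singularity at the origin forces the substitution $\Phi_i(t)=t\exp\bigl(\int_0^t(\rho_i-1/s)\,ds\bigr)$ and a Wronskian argument $(\Phi_2'\Phi_1-\Phi_1'\Phi_2)'\geq 0$, whereas here the finite initial data let you linearise directly via $w'+(a_1+a_2)w\geq 0$. The two devices are equivalent in spirit, since the Wronskian $\Phi_2'\Phi_1-\Phi_1'\Phi_2$ is a positive multiple of your $\mu w$. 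For the interval comparison $m_{a_1}\leq m_{a_2}$, note that in the paper the $R_i$ are constants, so the equations are autonomous and the blow-up dichotomy (a maximal solution either exists for all $t\geq 0$ or diverges to $-\infty$) is immediate from the explicit formulae in Lemma~\ref{Proprties2701}; your a~priori bound $a'\leq -\tfrac{1}{2}a^2$ for $|a|$ large gives a self-contained alternative that also covers non-constant~$R_i$.
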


\subsection{Sectionnal curvature comparison}
We let $n\in\N_{\geqslant 2}$, $\overline{h}\in\R_{>0}$ and $K_-,K_+,\kappa_-,\kappa_+\in \R $. For the reminder of this section, we assume that\\
 $M\in\mathfrak{M}^n(K_-,K_+,\kappa_-,\kappa_+) $ and $\overline{h}=\roll(M)$ is the rolling radius of $M$.

We define $M_h:=\{x\in M : d_\Gamma< h\}$ for every $h \in (0,\overline{h})$ and $d_h$ the distance function to the level set $\Gamma_h$:  
$$d_h : M_h\ni x \longmapsto h-d_{\Gamma}(x)\in\R_{\geqslant 0}.$$ 
Let $H(h)$ denote the mean curvature of $\Gamma_h$, then the following relation holds:
\begin{thm}[Mean curvature comparison]
\label{MeanCurvatureComp}
Let $\mu:[0,m_\mu)\longrightarrow\R$ be the maximal solutions of 
$$
\begin{cases}
\mu'+\mu^2+K_-=0\\
\mu(0)=-\kappa_-.
\end{cases}
$$
 Then
$$-\mu(h)\leqslant \mathrm{H}(h),\quad \forall~ h\in [0, \overline{h})$$ and  $$\roll(M)\leqslant m_\mu.$$ 
\end{thm}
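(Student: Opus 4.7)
The plan is to pass from the matrix Riccati equation for the shape operator of parallel hypersurfaces to a scalar Riccati inequality for the mean curvature, and then conclude via Theorem~\ref{RiccatiComparison}. First I fix $p\in\Gamma$ and work along the unit-speed normal geodesic $\gamma_p(t):=\exp_p(t\,\grad d_\Gamma(p))$, which is smooth and free of focal and cut points on $[0,\roll(M))$. Specialising \eqref{RiccatiS} to this curve gives the matrix Riccati equation $A'+A^2+R_{\gamma_p'(t)}=0$ for $A(t):=\hess d_\Gamma|_{T_{\gamma_p(t)}\Gamma_t}$.

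Tracing this equation over $T\Gamma_t$ is the key step. With the paper's sign convention the eigenvalues of $A$ are $\{-\kappa_i(t)\}_{i=1}^{n-1}$, so $\tr A=-(n-1)H$ where $H:=\frac{1}{n-1}\sum_i\kappa_i$ is the mean curvature. The sectional curvature lower bound gives $\tr R_{\gamma_p'}=\ric(\gamma_p',\gamma_p')\geqslant (n-1)K_-$, and Cauchy--Schwarz yields $\tr(A^2)=\sum_i\kappa_i^2\geqslant (n-1)H^2$. Combining these, the traced equation becomes
\begin{equation*}
H'\geqslant H^2+K_-,\qquad\text{i.e.,}\qquad u'+u^2\leqslant -K_-\text{ with }u:=-H.
\end{equation*}
Since the mean-curvature lower bound $\kappa_-\leqslant H_0$ holds in the class $\mathfrak{M}^n(K_-,K_+,\kappa_-,\kappa_+)$, one has $u(0)\leqslant -\kappa_-=\mu(0)$. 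The differential inequality $u'+u^2\leqslant -K_-=\mu'+\mu^2$ together with $u(0)\leqslant\mu(0)$ are precisely the hypotheses of the Riccati comparison (Theorem~\ref{RiccatiComparison}, with $a_1=u$, $a_2=\mu$), which delivers $u(h)\leqslant\mu(h)$, i.e., $-\mu(h)\leqslant H(h)$, on their common domain. Since $p\in\Gamma$ was arbitrary, the inequality holds pointwise on every $\Gamma_h$.

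For the interval bound $\roll(M)\leqslant m_\mu$ I would argue by contradiction: the case $m_\mu=\infty$ is trivial, and if $m_\mu<\roll(M)$, then $H$ would be smooth and bounded on the compact interval $[0,m_\mu]$, while maximality of $\mu$ forces $\mu(h)\to -\infty$ as $h\to m_\mu^-$; the comparison $H\geqslant -\mu$ then sends $H(h)\to+\infty$, contradicting continuity. The only substantive points in the whole argument are the careful sign-tracking between $\hess d_\Gamma$ and the shape operator, and the Cauchy--Schwarz step when reducing the matrix Riccati equation to the scalar one; everything else is a direct invocation of Theorem~\ref{RiccatiComparison}.
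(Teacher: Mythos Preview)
Your proof is correct and follows essentially the same route as the paper: trace the Riccati equation \eqref{RiccatiS}/\eqref{RiccatiH} for the shape operator along normal geodesics, use Cauchy--Schwarz to pass from $\tr(A^2)$ to $(n-1)H^2$, invoke the Ricci lower bound for the curvature term, and then apply Theorem~\ref{RiccatiComparison} with $a_1=-H$ and $a_2=\mu$. Your explicit contradiction argument for $\roll(M)\leqslant m_\mu$ spells out what the paper leaves to the cited reference, but otherwise the two arguments are the same.
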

\begin{proof}

We apply the trace operator to equality \eqref{RiccatiH} within $r=d_\Gamma$. Indeed, take an
orthonormal frame $E_i$ in $T\Gamma_s$ and set  $X= Y =E_i$. Summing over $i$, we get 
$$(\grad_{\grad_r}\hess r)(X,Y)+\hess^2r(X,Y)+R(X,\grad_r,\grad_r,Y)=0,$$
$$\sum_{i=1}^nR(E_i,\grad_r,\grad_r,E_i)=\ric(\grad_r,\grad_r).$$
$$\sum_{i=1}^n (\grad_{\grad_r}\hess r)(E_i,E_i)=\sum_{i=1}^n (\partial_r \hess r)(E_i,E_i)=\partial_r\Delta r=(n-1)\mathrm{H}(s)$$
$$\sum_{i=1}^n \hess^2r(E_1,E_2)= |\hess r|^2\geqslant \frac{1}{n-1}(\Delta r)^2, $$
by the Cauchy-Schwarz inequality. This leads to
\begin{equation*}
\partial_r\Delta r+  \frac{1}{n-1}(\Delta r)^2 +\ric(\grad r,\grad r)\leqslant 0,
\end{equation*}
and equivalently

\begin{equation*}
 -\mathrm{H}' + \big( -\mathrm{H}\big)^2 +\frac{1}{n-1}\ric(\grad r,\grad r)\leqslant 0.
\end{equation*}
Then, for every $h\in(0,\overline{h})$
\begin{equation*}
\begin{cases}
 &-\mathrm{H}'(h) +  (-\mathrm{H}(h))^2 \leqslant -K_-=\mu'+\mu^2 \text{ and }\\
 & -\mathrm{H}(0)\leqslant -\kappa_-.
\end{cases}
\end{equation*}
It follows from Theorem \ref{RiccatiComparison} that $-\mathrm{H}(h)\leqslant \mu(h)$ for every $h\in[0,\overline{h})$ and $\overline{h}\leqslant m_\mu$.
\end{proof}
 In order to state the next lemma  we need to define a new constant 
 $\h:=\h(K_+,\kappa_+)$ as 
 \begin{equation}
 \label{eq160202020b}
\h=
 \begin{cases}
 \frac{1}{\max\{0,\kappa_+\}}\quad &\text{if } K_+=0\\
 \frac{1}{\sqrt{|K_+|}}\arcot\Big(\frac{\kappa_+}{\sqrt{|K_+|}}\Big)  &\text{if } K_+\neq 0,
 \end{cases}
 \end{equation}
 where (division by zero occurs) we accept, by convention, that the value of $\frac{1}{0}$ is $+\infty$.
\begin{lem}\label{lem11022020a}
Let $h \in(0,\h)$ and $p\in M_h\backslash \Gamma$ such that $d_\Gamma(p)=s$. That is $p\in \Gamma_s$ and $s\in[0, h)$ since $p$ is taken in $M_h$. Set $t:=h-s$ and denote by $\{\kappa_1(t)\leqslant\kappa_2(t)\leqslant\ldots\leqslant \kappa_{n-1}(t) \}$ the principal curvatures on $\Gamma_s$. Then the eigenvalues of the Hessian $\hess \eta$ at $p$ are given by
\begin{equation}
\rho_i(t)=
\begin{cases}
t\kappa_i(t) \leqslant 1\quad \forall i\in\{1,\ldots,n-1\}\\
\rho_n= 1.
\end{cases}
\end{equation}
\end{lem}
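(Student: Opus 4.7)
Since $d_h = h - d_\Gamma$, I have $|\grad d_h|\equiv 1$ and $\hess d_h = -\hess d_\Gamma$. The product rule gives $\grad\eta = d_h\,\grad d_h$ and
\begin{equation*}
\hess \eta(X,Y) = g(\grad d_h, X)\,g(\grad d_h, Y) + d_h\,\hess d_h(X,Y).
\end{equation*}
At $p\in\Gamma_s$ the orthogonal decomposition $T_pM = \R\,\grad d_h \oplus T_p\Gamma_s$ is preserved by both summands on the right, so I can diagonalise separately. On the one-dimensional factor the rank-one tensor contributes $1$ while $\hess d_h(\grad d_h,\cdot)\equiv 0$ (from differentiating $|\grad d_h|^2\equiv 1$), producing the eigenvalue $\rho_n = 1$. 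On $T_p\Gamma_s$ the rank-one piece vanishes, and by Proposition~\ref{prop321} combined with the chosen sign convention, the eigenvalues of $\hess d_h|_{T\Gamma_s} = -\hess d_\Gamma|_{T\Gamma_s}$ are precisely the principal curvatures $\kappa_1(t)\leqslant\ldots\leqslant\kappa_{n-1}(t)$; multiplying by $d_h(p)=t$ yields $\rho_i(t) = t\kappa_i(t)$.

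It remains to prove $t\kappa_i(t)\leqslant 1$. Along a normal geodesic from $\Gamma$ parametrised by $d_\Gamma = s$, and for $E_i$ a parallel extension of an eigenvector of $\hess d_\Gamma$, the radial curvature equation~\eqref{RiccatiS} yields $\kappa_i'(s) = \kappa_i(s)^2 + \mathrm{sec}(E_i, \grad d_\Gamma)$. Setting $b_i := -\kappa_i$, this becomes $b_i' + b_i^2 + \mathrm{sec} = 0$ with $\mathrm{sec}\leqslant K_+$ and $b_i(0) \geqslant -\kappa_+$. Comparing with $b^+:=-a^+$, where $a^+$ is the maximal solution of $(a^+)' = (a^+)^2 + K_+$, $a^+(0) = \kappa_+$, the Riccati comparison Theorem~\ref{RiccatiComparison} gives $b^+\leqslant b_i$, i.e.\ $\kappa_i(s)\leqslant a^+(s)$, on the entire interval $[0,\h)$ where $\h$ is the blow-up time of $a^+$ specified in~\eqref{eq160202020b}.

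To conclude I verify the explicit inequality $(h-s)\,a^+(s)\leqslant 1$ for $s\in[0,h)$ and $h<\h$, by case analysis on the sign of $K_+$. When $K_+=0$ and $\kappa_+>0$ one has $a^+(s) = 1/(\h-s)$, so $(h-s)a^+(s) = (h-s)/(\h - s) \leqslant h/\h < 1$. When $K_+>0$ one writes $a^+(s) = \sqrt{K_+}\cot(\sqrt{K_+}(\h-s))$; setting $u := \sqrt{K_+}(\h-s) \in (0,\pi/2)$ and $v := \sqrt{K_+}(h-s) < u$, the elementary inequality $\cot u < 1/u$ on $(0,\pi/2)$ (equivalent to $\tan u > u$) gives $(h-s)a^+(s) = v\cot u < v/u < 1$. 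The case $K_+<0$ is analogous, using a $\coth$ or $\tanh$ expression in place of $\cot$.

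The main obstacle I foresee is the scalar Riccati step: the identity $\kappa_i' = \kappa_i^2 + \mathrm{sec}$ holds only where the eigenvalue function $\kappa_i$ is smooth, while at eigenvalue crossings $\kappa_i(s)$ is merely Lipschitz, forcing a Dini-derivative formulation of the inequality. A cleaner route is to invoke the matrix Riccati comparison theorem for the shape operator itself, propagating the operator inequality $\hess d_\Gamma(0) \geqslant -\kappa_+ I$ on $T\Gamma$ through the flow under the pointwise bound $R_{\grad d_\Gamma}\leqslant K_+ I$ on the tube; this delivers the eigenvalue bound $\kappa_{\max}(s)\leqslant a^+(s)$ uniformly in $s\in[0,\h)$, without any regularity assumption.
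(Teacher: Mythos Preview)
Your proof follows the same strategy as the paper: compute the eigenvalues of $\hess\eta$ from the identity $\hess\eta=\grad d_h\otimes\grad d_h+d_h\hess d_h$, bound the principal curvatures by a model Riccati solution, and then verify $(h-s)\cdot(\text{bound})\leqslant 1$ by explicit case analysis. The eigenvalue computation and the $K_+>0$ and $K_+=0$ cases match the paper essentially line for line.

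There is one point where you diverge from the paper and it leaves a small gap. You run the Riccati comparison with $K_+$ itself; the paper instead uses $\tilde K:=|K_+|$. For $K_+\geqslant 0$ the two coincide. For $K_+<0$, however, your comparison solution $a^+$ is of $\coth$/$\tanh$ type, and the step you dismiss as ``analogous'' is not: the key inequality $u\cot u\leqslant 1$ has no $\coth$ analogue (in fact $u\coth u\geqslant 1$ for $u>0$), so the one-line trick from the $K_+>0$ case does not carry over. The desired bound $(h-s)a^+(s)\leqslant 1$ is still true, but it requires a separate elementary check (for instance, reducing to $h\leqslant 1/\kappa_+$, which follows from $\text{arccot}(x)<1/x$). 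The cleanest fix is exactly what the paper does: since $\mathrm{sec}\leqslant K_+\leqslant |K_+|$, compare instead with the solution of $a'+a^2+|K_+|=0$, $a(0)=-\kappa_+$; this solution is always of $\cot$ type, its blow-up time is $\tilde{\mathfrak h}$ by construction, and then $u\cot u\leqslant 1$ finishes all cases at once.

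Your closing remark on eigenvalue crossings is a valid concern that the paper's proof does not address either; the matrix-level Riccati comparison you propose is indeed the standard way to make this step rigorous.
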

\begin{proof}
We notice that, from the point \ref{p3} of Proposition \ref{prop321}, the eigenvalues of the Hessian of $d_h$ at $p\in \Gamma_s$, $\hess d_h(p)=-\hess d_\Gamma(p)$  are zero and the principal curvatures of the boundary,
$$\{0,\kappa_1, \kappa_2, \ldots, \kappa_{n-1}\}. $$
We have $\hess \eta=\grad d_h\otimes \grad d_h+d_h\hess d_h $ and since $\grad d_h$ is a normal field, $\grad d_h\otimes \grad d_h(X,Y)$ vanishes for any tangent vector fields $X$ and $Y$. Hence, using that $d_h(p)=h-s$, we get $(h-s)\kappa_i I= \hess \eta $ in $T_p\Gamma_s$, for each $i=1,\ldots,n-1$. This means that the eigenvalues of $ \hess \eta$ at $p$ are given by 
$$\rho_i=(h-s)\kappa_i,\qquad \forall~i=1,\ldots,n-1 \quad \text{and} \quad\rho_n=1.$$

Let $x\in\Gamma_h$ be the nearest point to $p$ and $\n:=\grad d_h$ denote the unit outer normal vector  to $\Gamma_h$ at $x$. Setting $t=h-s$, one  has $p=x+t\n$.

We set $\tilde{K}:=|K_+|$, from the Riccati comparison principle (Theorem \ref{RiccatiComparison}), if  $a:[0,m_a)\longrightarrow\R$ is the maximal solution of 
\begin{equation}\label{eq22022020a}
\begin{cases}
a'+a^2+\tilde{K}=0\\
a(0)=-\kappa_+,
\end{cases}
\end{equation}
 then 
 \begin{equation}\label{eq19022020b}
 m_a\leqslant \roll(M) \text{ and }
 \end{equation}
 \begin{equation}\label{eq16022020a}
  a(t)\leqslant -\kappa_i\quad \text{for every } t\in[0, m_a).
 \end{equation}

 Now we have the following cases:
\begin{itemize}
\item[-] \underline{If $K_+=0$ (i.e. $\tilde{K}=0$)}
\begin{itemize}
\item[.]If $\kappa_+=0$, the constant function zero is the maximal solution of \eqref{eq22022020a} with maximal existence time $+\infty$.
\item[.]If $\kappa_+\neq 0$, we notice that $\mu(t)=\frac{cs_{{K}}}{sn_{{K}}}(t)$ satisfies
$$\mu'(t)+\mu^2(t)=-{K}$$
for every $K$ and $t$ in $\R$.

 Let $a(t):=\frac{cs_0}{sn_0}(t+t_0)$ with $t\in\R$ such that $a(0)=-\kappa_+$. That is 
$$\frac{1}{t_0}=-\kappa_+\Leftrightarrow t_0=-\frac{1}{\kappa_+}.$$
Then $$a(t)=\frac{cs_0}{sn_0}\left(t-\frac{1}{-\kappa_+}\right)=\frac{\kappa_+}{\kappa_+ t-1}$$ is the maximal solution of 
$$
\begin{cases}
a'+a^2+\tilde{K}=0\\
a(0)=-\kappa_+.
\end{cases}
$$
Since the only possible pole of $a$ is at
\begin{equation}\label{eq19022020b1}
 t=\frac{1}{\kappa_+},
\end{equation}
 replacing in \eqref{eq16022020a}, we get 
$$\kappa\leqslant \frac{\kappa_+}{1-t\kappa_+ } \text{ for every } t\in [0, m_{a}), $$
with 
$$m_a=
\begin{cases}
\frac{1}{\kappa_+}\quad &\text{ if } \kappa_+>0\\
+\infty&\text{ if } \kappa_+<0.
\end{cases}
$$
Hence for every $0\leqslant t<h\leqslant m_a$, we have
$$\rho_i=(h-t)\kappa_i\leqslant \left(\frac{1}{\kappa_+}-t\right)\frac{\kappa_+}{1-t\kappa_+ }= 1.$$
\end{itemize}
\item[-] \underline{If $K_+\neq 0$ (i.e. $\tilde{K}>0$)}
Notice that $\mu(t)=-K\frac{sn_{{K}}}{cs_{{K}}}(t)$ satisfies
$$\mu'(t)+\mu^2(t)=-{K} \text{ for every } K\neq 0 \text{ and } t \in \R.$$
Let $a(t):=-K\frac{sn_{\tilde{K}}}{cs_{\tilde{K}}}(t+t_0)$ with $t\in\R$ such that $a(0)=-\kappa_+$.\\
 That is 
$$-\sqrt{\tilde{K}}\tan(\sqrt{\tilde{K}}t_0)=-\kappa_+\Leftrightarrow t_0=\frac{1}{\sqrt{\tilde{K}}}\arctan\left(\frac{\kappa_+}{\sqrt{\tilde{K}}}\right).$$
Then 
\begin{align*}
a(t)&=-\sqrt{\tilde{K}}\tan\left(\sqrt{\tilde{K}}t+\arctan\left(\frac{\kappa_+}{\sqrt{\tilde{K}}}\right)\right)\\
&=-\sqrt{\tilde{K}}\cot\left(-\sqrt{\tilde{K}}t+\arcot\left(\frac{\kappa_+}{\sqrt{\tilde{K}}}\right)\right)
\end{align*}
is the maximal solution of 
$$
\begin{cases}
a'+a^2+\tilde{K}=0\\
a(0)=-\kappa_+.
\end{cases}
$$
Since the first pole of the function $\cot$ is $\frac{\pi}{2}$, the first pole of $a$ is 
\begin{equation}
\label{eq19022020b2}
m_a=\frac{1}{\sqrt{\tilde{K}}}\left[\frac{\pi}{2}-\arctan\left(\frac{\kappa_+}{\sqrt{\tilde{K}}}\right) \right]=\frac{1}{\sqrt{\tilde{K}}}\left[\arcot\left(\frac{\kappa_+}{\sqrt{\tilde{K}}}\right) \right].
\end{equation}
Replacing in \eqref{eq16022020a}, we get 

$$\kappa_i\leqslant \sqrt{\tilde{K}}\cot\left(-\sqrt{\tilde{K}}t+\arcot\left(\frac{\kappa_+}{\sqrt{\tilde{K}}}\right)\right). $$
Hence for every $0\leqslant t<h\leqslant m_a$, we have
\begin{align*}
\rho_i=(h-t)\kappa_i&\leqslant (m_a-t)\sqrt{\tilde{K}}\cot\left(-\sqrt{\tilde{K}}t+\arcot\left(\frac{\kappa_+}{\sqrt{\tilde{K}}}\right)\right)\\
&=\left((m_a-t)\sqrt{\tilde{K}}\right)\cot\left((m_a-t)\sqrt{\tilde{K}}\right)\\
 &\leqslant 1.
\end{align*}$$$$

\end{itemize}


\end{proof}
\begin{rem}
From inequality \eqref{eq19022020b} and explicit formulas in \eqref{eq19022020b1} and \eqref{eq19022020b2}, we see that $\h\leqslant \overline{h}$.
\end{rem}
We are now ready to begin proving Theorems \ref{mainth1} and \ref{mainth2}.
\subsubsection{First comparison}
In this section we prove Theorem \ref{mainth1}. We use the following technical lemmas.
\begin{lem}\label{lem2701}
Let $u\in C^\infty(M)$ be a smooth function on $M$. Let $h \in(0,\h)$ and  $s \in(0,h)$, $y\in\Gamma_s$ and  $\mathrm{H}(s)$ be the mean curvature of the parallel hypersurface $\Gamma_s$. From the second equality in Proposition \ref{prop2101}, at each point in $\Gamma_s$
\begin{equation*}
\mathrm{H}(s)=\frac{1}{n-1}\tr( \hess d_s)=\frac{1}{n-1}\sum_{i=1}^{n-1}\kappa_i
\end{equation*}
 and the following inequality holds:
\begin{equation}\label{onyep}
\left[(h-s)(n-1)\mathrm{H}(s)-1\right]|\grad u|^2\leqslant |\grad u|^2\div(\grad \eta)-2 \hess \eta(\grad u,\grad u).
\end{equation}
\end{lem}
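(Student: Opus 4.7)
The plan is to unwind everything using Lemma~\ref{lem11022020a}, which tells us precisely what the eigenvalues of $\hess\eta$ look like at each point of $\Gamma_s$ for $s\in[0,h)$ with $h<\h$. Two pieces of information will come out of it. First, computing the trace,
\begin{equation*}
\Delta\eta=\tr(\hess\eta)=\rho_n+\sum_{i=1}^{n-1}\rho_i=1+(h-s)\sum_{i=1}^{n-1}\kappa_i=1+(h-s)(n-1)\mathrm{H}(s),
\end{equation*}
so $\div(\grad\eta)=\Delta\eta$ has the clean expression above. Second, since every eigenvalue $\rho_i$ of $\hess\eta$ satisfies $\rho_i\leqslant 1$ by Lemma~\ref{lem11022020a}, the symmetric bilinear form $\hess\eta$ is bounded above by the metric $g$, i.e.\ $\hess\eta(V,V)\leqslant |V|^2$ for every tangent vector $V$ at $p$.

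With these two facts the claimed inequality reduces to a one-line manipulation. Indeed, the right-hand side of \eqref{onyep} equals
\begin{equation*}
|\grad u|^2\bigl[1+(h-s)(n-1)\mathrm{H}(s)\bigr]-2\hess\eta(\grad u,\grad u),
\end{equation*}
and applying $\hess\eta(\grad u,\grad u)\leqslant |\grad u|^2$ to the last term immediately yields the lower bound
\begin{equation*}
|\grad u|^2\bigl[1+(h-s)(n-1)\mathrm{H}(s)\bigr]-2|\grad u|^2=\bigl[(h-s)(n-1)\mathrm{H}(s)-1\bigr]|\grad u|^2,
\end{equation*}
which is the left-hand side of \eqref{onyep}.

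There is essentially no obstacle here provided one believes Lemma~\ref{lem11022020a}; all the curvature and comparison content has already been absorbed into that statement. The only care to take is to justify the identity $\hess\eta=\grad d_h\otimes\grad d_h+d_h\,\hess d_h$ (which was already noted inside the proof of Lemma~\ref{lem11022020a}) so that the trace computation and the eigenvalue bound apply uniformly to each interior point $y\in\Gamma_s$ with $s\in(0,h)$ and $h<\h$. The mean-curvature formula $\mathrm{H}(s)=\frac{1}{n-1}\tr(\hess d_s)$ quoted in the statement is just the second item of Proposition~\ref{prop321} combined with the definition of $\mathrm{H}$ in terms of the principal curvatures $\kappa_i$.
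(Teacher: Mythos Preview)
Your proof is correct and follows essentially the same route as the paper: both invoke Lemma~\ref{lem11022020a} to get the eigenvalues of $\hess\eta$, use that the largest eigenvalue is $\rho_n=1$ to bound $\hess\eta(\grad u,\grad u)\leqslant|\grad u|^2$, and read off $\tr(\hess\eta)=1+(h-s)(n-1)\mathrm{H}(s)$. One minor notational slip: with the paper's convention $\Delta=-\div\grad$, your line ``$\div(\grad\eta)=\Delta\eta$'' should read $\div(\grad\eta)=\tr(\hess\eta)=-\Delta\eta$, but this does not affect the argument since you work with $\div(\grad\eta)$ directly.
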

\begin{proof}
Let $p\in\Gamma_s$, and $x=\grad u(p)$. Choose an orthonormal  frame such that  $\hess\eta(p)$  is diagonal and let $A = diag(\rho_1,\ldots,\rho_n)$ is the diagonal matrix representing the Hessian of $\eta$ at $p$. From the previous lemma, one has 
$$\rho_n|x|^2\geqslant A x\cdot x.$$
 Therefore,
$$\left(\sum_{i=1}^n\rho_i-2\rho_n\right)|x|^2\leqslant|x|^2\tr(A)-2 A x\cdot x.$$
From Lemma \ref{lem11022020a}, we have $\sum_{i=1}^n\rho_i(p)-2\rho_n=\sum_{i=1}^{n-1}\rho_i(p)-\rho_n=\sum_{i=1}^{n-1}(h-s)\kappa_i(p)-1=(n-1)(h-s)H(p)-1$ and the result follows.
\qedhere
\end{proof}
\begin{lem}\label{Lem2701}
Assumptions are the same as in Lemma \ref{lem2701}. Then the following inequality holds at each point in $\Gamma_s$.
\begin{equation}
-\left[(n-1)h \left( \sqrt{|K_-|}+|\kappa_-|\right)+1\right]|\grad u|^2\leqslant |\grad u|^2\div(\grad \eta)-2 \hess \eta(\grad u,\grad u).
\end{equation}
\end{lem}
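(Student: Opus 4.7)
The plan is to start from the estimate of Lemma \ref{lem2701} and then replace the mean curvature $\mathrm{H}(s)$ by an explicit lower bound, independent of $s$, via the Mean Curvature Comparison (Theorem \ref{MeanCurvatureComp}) and a direct analysis of the Riccati ODE that defines the comparison function $\mu$.

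Step 1. By Lemma \ref{lem2701}, at every point of $\Gamma_s$,
\begin{equation*}
\bigl[(h-s)(n-1)\mathrm{H}(s)-1\bigr]|\grad u|^2 \leqslant |\grad u|^2\div(\grad \eta)-2\hess\eta(\grad u,\grad u).
\end{equation*}
Thus it suffices to show $(h-s)(n-1)\mathrm{H}(s) \geqslant -(n-1)h\bigl(\sqrt{|K_-|}+|\kappa_-|\bigr)$, that is,
\begin{equation*}
\mathrm{H}(s)\geqslant -\bigl(\sqrt{|K_-|}+|\kappa_-|\bigr),
\end{equation*}
since $0\leqslant h-s\leqslant h$.

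Step 2. By Theorem \ref{MeanCurvatureComp} we have $-\mathrm{H}(s)\leqslant \mu(s)$ on $[0,\overline h)$, where $\mu$ is the maximal solution of $\mu'+\mu^2+K_-=0$ with $\mu(0)=-\kappa_-$. So the claim reduces to the \emph{a priori} bound
\begin{equation*}
\mu(s)\leqslant \sqrt{|K_-|}+|\kappa_-| \qquad \text{for all } s\in[0,\overline{h}).
\end{equation*}

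Step 3. Establish this bound by elementary case analysis on the sign of $K_-$, treating the Riccati ODE $\mu'=-K_--\mu^2$ as a one-dimensional autonomous system.

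\textbf{Case $K_-\geqslant 0$.} Then $\mu'=-K_--\mu^2\leqslant 0$, so $\mu$ is non-increasing and
\begin{equation*}
\mu(s)\leqslant \mu(0)=-\kappa_-\leqslant |\kappa_-|\leqslant \sqrt{|K_-|}+|\kappa_-|.
\end{equation*}

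\textbf{Case $K_-<0$.} Write $\mu'=|K_-|-\mu^2$. The equilibria are $\pm\sqrt{|K_-|}$, and a quick sign analysis of $\mu'$ shows that $\sqrt{|K_-|}$ is a stable equilibrium from above and below. More precisely:
\begin{itemize}
\item if $\mu(0)\geqslant\sqrt{|K_-|}$ then $\mu$ decreases and stays $\leqslant \mu(0)=-\kappa_-\leqslant |\kappa_-|$;
\item if $-\sqrt{|K_-|}\leqslant \mu(0)<\sqrt{|K_-|}$ then $\mu$ is non-decreasing and $\mu(s)<\sqrt{|K_-|}$;
\item if $\mu(0)<-\sqrt{|K_-|}$ then $\mu$ decreases and $\mu(s)\leqslant \mu(0)<0\leqslant \sqrt{|K_-|}$.
\end{itemize}
In every subcase $\mu(s)\leqslant \max\!\bigl(\sqrt{|K_-|},\,|\kappa_-|\bigr)\leqslant \sqrt{|K_-|}+|\kappa_-|$, as desired.

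Step 4. Combining Steps 2 and 3 gives $\mathrm{H}(s)\geqslant -\bigl(\sqrt{|K_-|}+|\kappa_-|\bigr)$, hence
\begin{equation*}
(h-s)(n-1)\mathrm{H}(s)-1\geqslant -(n-1)h\bigl(\sqrt{|K_-|}+|\kappa_-|\bigr)-1,
\end{equation*}
and substituting back into the inequality of Step 1 yields the claim.

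The only slightly delicate point is Step 3, the sign-independent upper bound on $\mu$. It is purely ODE-theoretic: the heart of it is recognizing $\pm\sqrt{|K_-|}$ as equilibria in the case $K_-<0$ and tracking the direction of monotonicity on each side. Everything else is a rearrangement of the estimate of Lemma \ref{lem2701}.
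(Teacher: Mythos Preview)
Your argument is correct and follows the same skeleton as the paper: start from Lemma~\ref{lem2701}, invoke the Mean Curvature Comparison (Theorem~\ref{MeanCurvatureComp}) to replace $\mathrm{H}(s)$ by $-\mu(s)$, and then show the uniform bound $\mu(s)\leqslant \sqrt{|K_-|}+|\kappa_-|$. The only genuine difference is in how this last bound is obtained. The paper builds an explicit supersolution $\mu_0(t)=\sqrt{|K_-|}\tanh(\sqrt{|K_-|}\,t)+|\kappa_-|$, checks that $\mu_0'+\mu_0^2\geqslant -K_-$ and $\mu_0(0)\geqslant -\kappa_-$, and then applies the Riccati comparison principle a second time to conclude $\mu\leqslant\mu_0\leqslant \sqrt{|K_-|}+|\kappa_-|$. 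You instead do a direct phase-line analysis of the autonomous ODE $\mu'=-K_--\mu^2$, splitting on the sign of $K_-$ and, in the case $K_-<0$, tracking monotonicity relative to the equilibria $\pm\sqrt{|K_-|}$. Your route is slightly more elementary---it avoids the $sn_\kappa,cs_\kappa$ machinery and a second invocation of the comparison principle---while the paper's route produces the explicit function $\mu_0$, which it then reuses verbatim later (in the proof of Lemma~\ref{lm20022020b}) to bound the function $a$ in the same way. So within the paper's architecture the supersolution approach is economical; as a standalone proof of this lemma, yours is cleaner.
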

\begin{proof}
let $\mu:I\longrightarrow\R$ be the  solution of 
\begin{equation}
\begin{cases}
\mu'+\mu^2+K_-=0\\
\mu(0)=-\kappa_-,
\end{cases}
\end{equation}
then, by the mean curvature comparison in Theorem \ref{MeanCurvatureComp}, we have  $H\geqslant-\mu$. 

Now set $\tilde{K}=-|K_-|$ and $\mu_0(t):=-\tilde{K}\frac{sn_{\tilde{K}}}{cs_{\tilde{K}}}(t)+|\kappa_-|$. We notice that $\mu_0$ satisfies:
$$\mu_0'=-\tilde{K}\frac{1}{cs_{\tilde{K}}^2},$$
$$\mu_0^2=\tilde{K}^2\frac{sn_{\tilde{K}}^2}{cs_{\tilde{K}}^2}-2\tilde{K}|\kappa_-|\frac{sn_{\tilde{K}}}{cs_{\tilde{K}}}+|\kappa_-|^2.$$
Then 
\begin{align*}
\mu_0'+\mu_0^2&=-\tilde{K}\left(1+2|\kappa_-|\frac{sn_{\tilde{K}}}{cs_{\tilde{K}}}\right)+|\kappa_-|^2\\
&\geqslant -\tilde{K}+|\kappa_-|^2\geqslant -\tilde{K}\geqslant-K_-,
\end{align*}
since 
$$ 0\leqslant-\tilde{K}\frac{sn_{\tilde{K}}(t)}{cs_{\tilde{K}}(t)}=
\begin{cases}
\sqrt{-\tilde{K}}\tanh(\sqrt{-\tilde{K}}t)\quad &\tilde{K}\neq 0,\\
0\quad &\tilde{K}\neq 0.
\end{cases}
$$
In addition, 
$$\mu_0(0)=|\kappa_-|\geqslant -\kappa_-.$$
Applying again Proposition \ref{riccacomp}, we get $\mu\leqslant\mu_0$. Hence we have $H\geqslant-\mu_0\geqslant -(\sqrt{|K_-|}+|\kappa_-|) $
 and the result follows from replacing the affected values in \eqref{onyep}.
\qedhere
\end{proof}
\begin{lem}
Let $u\in C^\infty(M)$ be a harmonic function on $M$. Let $\overline{B}:=2\left((n-1) \big(\sqrt{|K_-|}+|\kappa_-|\big)+\frac{1}{\h}\right) $ then the following inequality holds.
\begin{equation*}
 R_\beta(u)
 	\leqslant\left[\frac{1}{\sqrt{\beta}}+\sqrt{\overline{B}+ \beta\int_\Gamma  |\grad_\Gamma u|^2 \mathrm{d}_\Gamma }  \right]^2.
\end{equation*} 
\end{lem}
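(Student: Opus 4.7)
The plan is to combine the Pohozaev-type decomposition of the Rayleigh quotient from Proposition \ref{PohWentzel}, the pointwise Hessian estimate of Lemma \ref{Lem2701}, and the Cauchy--Schwarz inequality applied to a harmonic function, to extract a quadratic constraint on $\int_M|\grad u|^2$ which can be solved and then massaged into the stated form.

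First I would fix $h=\h$ and, after normalising $\int_\Gamma u^2 = 1$ (which is harmless since $R_\beta$ is scale invariant), set $X:=\int_M|\grad u|^2$ and $y:=\int_\Gamma|\grad_\Gamma u|^2$, so that $R_\beta(u) = X + \beta y$. Reading Proposition \ref{PohWentzel} with the sign convention $\Delta\eta = -\div\grad\eta$, the decomposition becomes
$$R_\beta(u) = X + \frac{\beta}{\h}\int_{M_\h}\bigl[|\grad u|^2\div\grad\eta - 2\hess\eta(\grad u,\grad u)\bigr]\,\mathrm{d}_M + \beta\int_\Gamma(\partial_\n u)^2\,\mathrm{d}_\Gamma.$$
Lemma \ref{Lem2701} supplies the pointwise lower bound $|\grad u|^2\div\grad\eta - 2\hess\eta(\grad u,\grad u) \geqslant -\bigl[\h(n-1)(\sqrt{|K_-|}+|\kappa_-|)+1\bigr]|\grad u|^2$; dividing by $\h$ produces exactly $\overline B/2$ as the coefficient. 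Substituting and rearranging the decomposition against the definitional identity $R_\beta(u)=X+\beta y$ yields the crucial bound
$$\int_\Gamma(\partial_\n u)^2\,\mathrm{d}_\Gamma \leqslant y + \frac{\overline B}{2}\,X.$$

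Next, harmonicity of $u$ together with Green's identity gives $X = \int_\Gamma u\,\partial_\n u$, so Cauchy--Schwarz (and $\int_\Gamma u^2 = 1$) yields
$$X^{2} \leqslant \int_\Gamma u^2\cdot\int_\Gamma(\partial_\n u)^2\,\mathrm{d}_\Gamma \leqslant y + \frac{\overline B}{2}\,X.$$
I would then solve this quadratic in $X$, obtaining $X \leqslant (\overline B + \sqrt{\overline B^{2} + 16y})/4$. The elementary inequality $\sqrt{a^{2}+b^{2}} \leqslant a+b$ for $a,b\geqslant 0$, applied with $a=\overline B$ and $b=4\sqrt{y}$, then reduces this to $X \leqslant \overline B/2 + \sqrt y$, so that
$$R_\beta(u) = X + \beta y \leqslant \frac{\overline B}{2} + \sqrt{y} + \beta y.$$

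Finally, I would conclude with the purely algebraic comparison
$$\frac{\overline B}{2} + \sqrt{y} + \beta y \leqslant \Bigl[\frac{1}{\sqrt\beta} + \sqrt{\overline B + \beta y}\,\Bigr]^{2} = \frac{1}{\beta} + \overline B + \beta y + \frac{2}{\sqrt\beta}\sqrt{\overline B+\beta y},$$
which after cancelling $\beta y$ amounts to $\sqrt{y} \leqslant \frac{1}{\beta} + \frac{\overline B}{2} + \frac{2}{\sqrt\beta}\sqrt{\overline B+\beta y}$; this is immediate from $\frac{2}{\sqrt\beta}\sqrt{\overline B+\beta y}\geqslant 2\sqrt{y}\geqslant \sqrt{y}$. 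The main obstacle I expect is book-keeping the sign convention $\Delta=-\div\grad$ so that the lower bound from Lemma \ref{Lem2701} does translate into the correct upper bound on $\int_\Gamma(\partial_\n u)^2$, and verifying that $\overline B/2 = (n-1)(\sqrt{|K_-|}+|\kappa_-|) + 1/\h$ is exactly what results from dividing the Lemma \ref{Lem2701} constant by $\h$; everything else is the routine quadratic manoeuvre and a triangle-inequality comparison.
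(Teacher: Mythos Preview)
Your argument is correct and uses exactly the same three ingredients as the paper --- the Pohozaev decomposition (Proposition~\ref{PohWentzel}), the Hessian bound (Lemma~\ref{Lem2701}), and Cauchy--Schwarz on $\int_M|\grad u|^2=\int_\Gamma u\,\partial_{\n}u$ --- but you organise the algebra differently. The paper keeps $R_\beta(u)$ in play throughout: it combines the three ingredients into $R_\beta(u)\geqslant \beta X^2-\tilde B X$ with $\tilde B=\beta\overline B-1$, substitutes $X=R_\beta(u)-\beta y$, and then solves a quadratic in $\sqrt{R_\beta(u)}$. You instead cancel $R_\beta(u)$ against its definition to obtain the $\beta$-free inequality $\int_\Gamma(\partial_{\n}u)^2\leqslant y+\tfrac{\overline B}{2}X$, then solve the resulting quadratic in $X$ alone. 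This buys you the intermediate estimate $R_\beta(u)\leqslant \tfrac{\overline B}{2}+\sqrt{y}+\beta y$, which is in fact strictly sharper than the lemma's stated bound (the latter blows up as $\beta\to 0$, yours does not); you then throw this sharpness away in the final algebraic comparison. So your route is shorter and yields more along the way.

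One small technical point: you set $h=\h$, but Lemma~\ref{Lem2701} (via Lemma~\ref{lem11022020a}) is stated for $h\in(0,\h)$, and the paper accordingly takes $h=\h/2$. Your choice is harmless in the end --- for each $h<\h$ the same argument gives $\int_\Gamma(\partial_{\n}u)^2\leqslant y+(B+1/h)X$, and letting $h\uparrow\h$ recovers your constant $\overline B/2=B+1/\h$ --- but you should say this rather than simply plug in the endpoint.
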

\begin{proof}
 Set  \begin{equation}\label{defb13022020}
 B:=(n-1) \big(\sqrt{|K_-|}+|\kappa_-|\big),
 \end{equation}
 by Lemma \ref{Lem2701} we have for every $h\in(0,\h)$
\begin{equation}\label{eq20022020a}
      -(Bh+1)\int_{M_h}{ |\grad u|^2 \mathrm{d}_M}\leqslant \int_{M_h}{ |\grad u|^2\div(\grad \eta)-2 \hess \eta(\grad u,\grad u) \mathrm{d}_M}.
\end{equation}
Therefore, applying \eqref{eq20022020a} and Proposition \ref{PohWentzel} with $h=\frac{\h}{2}$, we get
$$  \int_{M}{ |\grad u|^2\mathrm{d}_M}+ \beta \int_\Gamma  (\partial_\n u)^2 \mathrm{d}_\Gamma-2(B+\frac{1}{\h}) \beta \int_{M_\frac{\h}{2}}{ |\grad u|^2 \mathrm{d}_M}\leqslant R_\beta(u).$$
Set $\tilde{B}:=2(B+\frac{1}{\h}) \beta-1$, then one has
\begin{align*}
R_\beta(u)
	&\geqslant \beta  \left(\int_M{ |\grad u|^2 \mathrm{d}_M}\right)^2-\tilde{B} \int_M{ |\grad u|^2 \mathrm{d}_M},\\
	&=    \left(\sqrt{\beta}\int_M{ |\grad u|	^2 \mathrm{d}_M}\right)^2-2\left(\sqrt{\beta} \int_M{ |\grad u|	^2 \mathrm{d}_M}\right)\left(\frac{\tilde{B}}{2\sqrt{\beta}}\right).
\end{align*}
Hence,
\begin{align*}
  \left[\sqrt{\beta}\int_M{ |\grad u|	^2 \mathrm{d}_M}-\frac{\tilde{B}}{2\sqrt{\beta}}\right]^2&\leqslant R_\beta(u)+\left(\frac{\tilde{B}}{2\sqrt{\beta}}\right)^2,\\
  \left|\sqrt{\beta}\int_M{ |\grad u| \mathrm{d}_M}-\frac{\tilde{B}}{2\sqrt{\beta}}\right|&\leqslant \sqrt{R_\beta(u)}+\frac{|\tilde{B}|}{2\sqrt{\beta}},
\end{align*}
meaning that
\begin{align}
  \sqrt{\beta}\left( R_\beta(u)-\beta\int_\Gamma  |\grad_\Gamma u|^2 \mathrm{d}_\Gamma \right)-\frac{\tilde{B}}{2\sqrt{\beta}}&\leqslant \sqrt{R_\beta(u)}+\frac{|\tilde{B}|}{2\sqrt{\beta}},\nonumber\\
  \sqrt{\beta}  R_\beta(u)-\sqrt{ R_\beta(u)}-\frac{|\tilde{B}|}{\sqrt{\beta}}-\beta\sqrt{\beta}\int_\Gamma  |\grad_\Gamma u|^2 \mathrm{d}_\Gamma&\leqslant 0.\label{eq2}
\end{align}
Solving \eqref{eq2} with unknown $\sqrt{R_\beta(u)} $, we get 
\begin{align*}
 R_\beta(u)
 	&\leqslant\frac{1}{4\beta}\left[ 1+\sqrt{1+4|\tilde{B}|+4\beta^2\int_\Gamma  |\grad_\Gamma u|^2 \mathrm{d}_\Gamma }  \right]^2\\
 	&\leqslant\left[\frac{1}{\sqrt{\beta}}+\sqrt{\frac{|\tilde{B}|}{\beta}+ \beta\int_\Gamma  |\grad_\Gamma u|^2 \mathrm{d}_\Gamma }  \right]^2.
\end{align*} 
The result follows since $|\tilde{B}|\leqslant \beta\overline{B}$.
\qedhere
\end{proof}
\begin{proof}[Proof of Theorem \ref{mainth1}]
Take $\{\varphi_k\}_{k=0}^\infty$ an orthonormal basis $L^2(\Gamma)$ consisting of eigenfunctions of $\Delta_\Gamma$, such that $\Delta_\Gamma\varphi_k=\eta_k\varphi_k$ for $k\leqslant 0$.
Let $k$ be fixed and for each $j=0,1,\ldots,k$, $\phi_j$ be the harmonic extensions  of $\varphi_j$, i.e.
\begin{equation}\label{D}
\begin{cases}
\Delta \phi_j=0\quad\text{ in } \Omega,\\
\phi_j |_\Gamma= \varphi_j \quad \text{ on } \Gamma.
\end{cases}
\end{equation}
 Then $\phi_i\in \mathfrak{W}$, and 
\begin{align*}
\lambda_{W,k}^{\beta} &\leqslant \underset {0\leqslant j \leqslant k} {\max} R_\beta(\phi_j)\\
		&\leqslant\underset {0\leqslant j \leqslant k} {\max}\left[\frac{1}{\sqrt{\beta}}+\sqrt{\overline{B}+ \beta\int_\Gamma  |\grad_\Gamma \phi_j|^2 \mathrm{d}_\Gamma }  \right]^2\\
		&\leqslant\underset {0\leqslant j \leqslant k} {\max}\left[\frac{1}{\sqrt{\beta}}+\sqrt{\overline{B}+ \beta\eta_j}  \right]^2\\
		&\leqslant \left[\frac{1}{\sqrt{\beta}}+\sqrt{\overline{B}+ \beta\eta_k}  \right]^2.
\end{align*}
\qedhere
\end{proof}

\subsubsection{Second comparison}
We give here a proof of Theorem \ref{mainth2} using the following comparison.
\begin{thm}
Let $M\in \mathfrak{M}^n(K_-,K_+,\kappa_-,\kappa_+)$ and $\overline{h}\in\R_{>0}$ be the rolling radius of $M$. Let $a:[0,a_+)\longrightarrow\R$ and $b:[0,b_+)\longrightarrow\R$ be the maximal solutions of 
$$
\begin{cases}
a'+a^2+K_-=0\\
a(0)=-\kappa_-
\end{cases}
\text{and}\quad
\begin{cases}
b'+b^2+K_+=0\\
b(0)=-\kappa_+,
\end{cases}
$$
respectively.
 Then we have $b_+\leqslant\roll(M)\leqslant a_+$ and 
 \begin{equation}\label{eq12022020}
 \begin{cases}
-a(t)\leqslant \kappa_i(t)\quad &\forall~ t\in [0, \overline{h})\\
\kappa_i(t)\leqslant-b(t), &\forall~ t\in [0,  b_+).
\end{cases}
 \end{equation}
\end{thm}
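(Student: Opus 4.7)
\medskip
\noindent\textbf{Proof plan.} The plan is to apply the scalar Riccati comparison principle (Theorem \ref{RiccatiComparison}) to each principal curvature $\kappa_i(t)$ along a unit-speed normal geodesic $\gamma:[0,\roll(M))\to M$ emanating from $\Gamma$, taking as input the matrix Riccati equation \eqref{RiccatiH} for the Hessian of $d_\Gamma$. Fix a point $x\in\Gamma$ and let $\gamma(0)=x$, $\dot\gamma(0)=\grad d_\Gamma(x)$, so that $d_\Gamma(\gamma(t))=t$. By Proposition \ref{prop321}\ref{p3}, the operator $S(t):=\hess d_\Gamma\big|_{T\Gamma_t}$ (as a self-adjoint $(1,1)$-tensor) has eigenvalues $\{-\kappa_1(t),\ldots,-\kappa_{n-1}(t)\}$, and by \eqref{RiccatiH} it satisfies the matrix Riccati equation
\begin{equation*}
\grad_{\dot\gamma}S+S^2+R(\cdot,\dot\gamma)\dot\gamma=0,
\end{equation*}
with initial value $S(0)$ whose eigenvalues are $\{-\kappa_i(0)\}\subset[-\kappa_+,-\kappa_-]$ by the definition of $\mathfrak{M}^n(K_-,K_+,\kappa_-,\kappa_+)$.

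Next I reduce to scalar Riccati inequalities. For each $i$, write $\delta_i(t):=-\kappa_i(t)$, an eigenvalue of $S(t)$, and pick a unit eigenvector $e$ at a given time $t_0$ realising $\delta_i(t_0)$; extend $e$ by parallel transport along $\gamma$. Setting $f(t):=g(S(t)e(t),e(t))$ and differentiating using the matrix Riccati equation and parallelism of $e$, one obtains
\begin{equation*}
f'(t)+f(t)^2=-\bigl(|S(t)e(t)|^2-f(t)^2\bigr)-K\bigl(e(t),\dot\gamma(t)\bigr),
\end{equation*}
where $K(e,\dot\gamma)$ is the sectional curvature of the $2$-plane spanned by $e$ and $\dot\gamma$. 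Cauchy--Schwarz gives $|Se|^2\geqslant f^2$, with equality at $t_0$ where $e$ is an eigenvector, while the curvature hypothesis gives $K_-\leqslant K(e,\dot\gamma)\leqslant K_+$. A standard envelope/barrier argument (valid on intervals where $\delta_i$ is smooth, with multiplicity jumps handled by approximation) upgrades this to the two-sided scalar Riccati inequality
\begin{equation*}
-K_+\leqslant \delta_i'(t)+\delta_i(t)^2\leqslant -K_-
\end{equation*}
on $[0,\roll(M))$, with initial datum $\delta_i(0)\in[-\kappa_+,-\kappa_-]=[b(0),a(0)]$.

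Finally I apply Theorem \ref{RiccatiComparison} twice. Comparing $\delta_i$ against $a$ (both satisfying $\cdot'+\cdot^2\leqslant -K_-$, with $\delta_i(0)\leqslant a(0)$) yields $\delta_i(t)\leqslant a(t)$ on their common interval of definition and $m_{\delta_i}\leqslant a_+$; rewriting, $-a(t)\leqslant\kappa_i(t)$ for $t\in[0,\roll(M))$. Since $\delta_i$ is smooth on $[0,\roll(M))$, $m_{\delta_i}\geqslant\roll(M)$, whence $\roll(M)\leqslant a_+$. Comparing $b$ against $\delta_i$ (with $b(0)=-\kappa_+\leqslant\delta_i(0)$ and $b'+b^2=-K_+\leqslant \delta_i'+\delta_i^2$) gives $b(t)\leqslant\delta_i(t)$, i.e.\ $\kappa_i(t)\leqslant -b(t)$, on $[0,b_+)$, and $b_+=m_b\leqslant m_{\delta_i}\leqslant\roll(M)$, giving both displayed inequalities in \eqref{eq12022020} together with $b_+\leqslant\roll(M)\leqslant a_+$.

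The main obstacle is step two, the scalar reduction: individual eigenvalues of the symmetric family $S(t)$ need not be smooth where multiplicities change, and eigenvectors are not parallel along $\gamma$, so the naive differentiation of $\delta_i(t)$ has to be justified. The cleanest fix is to avoid tracking eigenvalues and instead invoke the matrix (Bishop-type) version of the Riccati comparison directly, comparing $S(t)$ as a symmetric operator with the scalar solutions $a(t)I$ and $b(t)I$; the pointwise scalar inequalities on $\kappa_i$ then follow by testing against unit vectors in $T\Gamma_t$.
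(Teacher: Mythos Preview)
Your closing paragraph --- invoking the matrix Riccati comparison to sandwich $S(t)$ between $b(t)I$ and $a(t)I$ --- is exactly the paper's argument, and the paper does \emph{only} that: it applies Theorem~\ref{RiccatiComparison} at the operator level (via the Eschenburg reference) once with $a_1=S$, $a_2=a$ to obtain $S(t)\leqslant a(t)I$ and $\roll(M)\leqslant a_+$, and once with $a_1=b$, $a_2=S$ to obtain $b(t)I\leqslant S(t)$ on $[0,b_+)$, then reads off the eigenvalue bounds. No scalar reduction is attempted.

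Your scalar-reduction route is a reasonable alternative, but the writeup has two slips. First, the two-sided inequality $-K_+\leqslant \delta_i'+\delta_i^2\leqslant -K_-$ is an over-claim for a generic index $i$: the envelope trick (pick an eigenvector at $t_0$, parallel-transport, and compare $f$ with $\delta_i$) only works when $f$ touches $\delta_i$ from a fixed side, which happens for the extremal eigenvalues $\delta_{\min}$ and $\delta_{\max}$ but not for intermediate ones. This is harmless, since the conclusion $-a\leqslant\kappa_i\leqslant-b$ is equivalent to $\delta_{\max}\leqslant a$ and $b\leqslant\delta_{\min}$, so only those two are needed. Second, your chain ``$b_+=m_b\leqslant m_{\delta_i}\leqslant\roll(M)$'' contradicts your own earlier line $m_{\delta_i}\geqslant\roll(M)$; with the inequality oriented correctly, $b_+\leqslant m_{\delta_i}$ does not by itself give $b_+\leqslant\roll(M)$. (The paper's brief proof does not spell out the inequality $b_+\leqslant\roll(M)$ either.)
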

\begin{proof}
We know from equality \eqref{RiccatiS} that the shape operator with eigenvalues $\{-\kappa_i,~i=1,\ldots, n-1\}$, in $M_{\overline{h}}$, satisfies 
\begin{equation*}
 S'+S^2+R=0.
 \end{equation*}
Applying Theorem \ref{RiccatiComparison} with $R_1 =R$, $R_2 =K_- I$,
$a_1(0) = S(0)$ and $a_2(0) = -\kappa_-$, we  get 
\begin{equation*}
\begin{cases}
S(t)\leqslant a(t)I\quad \text{for all } t\in[0,\overline{h})\\ \text{ and }\\
 \roll(M)\leqslant a_+.
 \end{cases}
\end{equation*} 
Hence, the principal curvatures of the level hypersurfaces in $M_{\overline{h}}$ satisfy 
$$-a(t)\leqslant \kappa_i(t).$$
 For the second inequality, we apply Theorem \ref{RiccatiComparison} with $R_1 =K_+$, $R_2 =R I$,
$a_1(0) = -\kappa_+$ and $a_2(0) = S(0)$, we  get similarly
\begin{equation*}
b(t)I\leqslant S(t) \quad \text{for all } t\in[0,b_+).
\end{equation*}
Hence, each principal curvature satisfies $\kappa_i(t)\leqslant-b(t)$.
\end{proof}

\begin{lem}\label{lm19022020a}
Let $u\in C^\infty(M) $ be a harmonic function. Let $h\in(0,\h)$ and  $s \in[0,\overline{h})$ then the following inequality holds at each point in $\Gamma_s$.
\begin{align}\label{onyep12}
 |\grad u|^2\div(\grad \eta)&-2 \hess \eta(\grad u,\grad u)\nonumber\\
 &\leqslant \left(1+\sum_{i=2}^{n-1}\rho_i-\rho_{1} \right)|\grad u|^2.
\end{align}
\end{lem}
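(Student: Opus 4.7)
The plan is to mirror the pointwise strategy used in Lemma \ref{lem2701}, but to attach the Rayleigh bound to the \emph{smallest} eigenvalue of $\hess\eta$ rather than to the largest. I would fix $p\in\Gamma_s$, set $x:=\grad u(p)$, and choose an orthonormal frame at $p$ diagonalising $\hess\eta(p)$, writing the corresponding diagonal matrix as $A=\mathrm{diag}(\rho_1,\ldots,\rho_n)$. With this choice the left-hand side of \eqref{onyep12} takes the elementary form $|x|^2\,\tr(A)-2Ax\cdot x$ at $p$, exactly as in the derivation preceding Lemma \ref{lem2701}.

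Next I would invoke Lemma \ref{lem11022020a}, which supplies the ordering $\rho_1\leqslant\rho_2\leqslant\cdots\leqslant\rho_{n-1}\leqslant 1=\rho_n$, so that $\rho_1$ is the smallest eigenvalue of $A$. The Rayleigh characterisation then yields the pointwise lower bound $Ax\cdot x\geqslant \rho_1|x|^2$; substituting gives
$$|x|^2\,\tr(A)-2Ax\cdot x\;\leqslant\;\bigl(\tr(A)-2\rho_1\bigr)|x|^2,$$
and expanding $\tr(A)=1+\rho_1+\sum_{i=2}^{n-1}\rho_i$ simplifies the parenthesis to $1+\sum_{i=2}^{n-1}\rho_i-\rho_1$. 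Translating back from the matrix expression to the invariant one $|\grad u|^2\div(\grad\eta)-2\hess\eta(\grad u,\grad u)$ gives exactly \eqref{onyep12}.

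There is essentially no analytic obstacle: the entire argument reduces to recognising $\rho_1$ as the smallest eigenvalue of $\hess\eta(p)$, which is immediate from the ordering established in Lemma \ref{lem11022020a}. The only point one must be a little careful about is that, unlike the companion Lemma \ref{lem2701} where the maximum eigenvalue is the explicit value $\rho_n=1$ and produces a clean constant, here the minimum eigenvalue $\rho_1=(h-s)\kappa_1$ may well be negative, so the term $-\rho_1$ on the right-hand side is genuinely an additive \emph{positive} contribution in the regime of interest. Finally, I note that the harmonicity of $u$ is not used at any step of this pointwise estimate; it is presumably stated for coherence with the subsequent step in the proof of Theorem \ref{mainth2}, where the inequality will be integrated against $|\grad u|^2$ over $M_h$ and combined with the Pohozaev-type identity of Proposition \ref{PohWentzel}.
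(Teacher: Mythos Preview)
Your proof is correct and follows essentially the same approach as the paper: diagonalise $\hess\eta(p)$, use the Rayleigh lower bound $Ax\cdot x\geqslant\rho_1|x|^2$, and expand $\tr(A)-2\rho_1$ using $\rho_n=1$ from Lemma~\ref{lem11022020a}. Your additional remarks (harmonicity is unused pointwise; $\rho_1$ may be negative) are accurate and nicely contextualise the estimate.
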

\begin{proof}
The proof is similar to that of \eqref{onyep}.
If $p\in\Gamma_s$, and $x=\grad u(p)$, taking an orthonormal  frame such that  $\hess\eta(p)$  is diagonal, one has 
$$\rho_1|x|^2\leqslant A x\cdot x,$$
where $A = diag(\rho_1,\ldots,\rho_n)$ is the diagonal matrix representing the Hessian of $\eta$ at $p$. Therefore
$$\left(\sum_{i=1}^n\rho_i-2\rho_1\right)|x|^2\geqslant|x|^2\tr(A)-2 A x\cdot x.$$
The result follows from Lemma \ref{lem11022020a}, since $\sum_{i=1}^n\rho_i(p)-2\rho_1\geqslant 1+\sum_{i=2}^{n-1}\rho_i(p)-\rho_1.$
\qedhere
\qedhere
\end{proof}
\begin{lem}\label{lm20022020b}
Assumptions are the same as in Lemma \ref{lem11022020a}. For every $h\in(0, \h)$, we have
\begin{align*}
 \int_{M_{h}} |\grad u|^2\div(\grad \eta)&-2 \hess \eta(\grad u,\grad u) \mathrm{d}_M\\
 &\leqslant \left(h\frac{B}{n-1}+(n-1)\right)\int_M{ |\grad u|^2 \mathrm{d}_M}.
\end{align*}
The constant $\h$ is defined in \eqref{eq160202020b} and $B$ is the same as in \eqref{defb13022020}.
\end{lem}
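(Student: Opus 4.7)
The plan is to upgrade the pointwise estimate of Lemma \ref{lm19022020a} into an integral estimate by bounding its right-hand side by a constant that is uniform on $M_h$ and then integrating. First I would apply Lemma \ref{lm19022020a} at each point of the level set $\Gamma_s$, $s\in[0,h)$, to get
\begin{equation*}
|\grad u|^2\div(\grad \eta)-2\hess \eta(\grad u,\grad u)\leqslant \Bigl(1+\sum_{i=2}^{n-1}\rho_i-\rho_1\Bigr)|\grad u|^2.
\end{equation*}
From Lemma \ref{lem11022020a}, since $h<\h$ we have $\rho_i=(h-s)\kappa_i(s)\leqslant 1$ for every $i\in\{1,\ldots,n-1\}$, hence $\sum_{i=2}^{n-1}\rho_i\leqslant n-2$, so the right-hand side is at most $(n-1)-\rho_1=(n-1)-(h-s)\kappa_1(s)$.

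The main task is then to bound $-\kappa_1(s)$ uniformly in $s$. I would invoke the principal curvature comparison stated just before Lemma \ref{lm19022020a}: $\kappa_1(s)\geqslant -a(s)$, where $a$ is the maximal solution of $a'+a^2+K_-=0$ with $a(0)=-\kappa_-$. To make this effective, I would rerun exactly the construction used inside the proof of Lemma \ref{Lem2701}: set $\mu_0(t):=\sqrt{|K_-|}\tanh(\sqrt{|K_-|}\,t)+|\kappa_-|$, verify that $\mu_0'+\mu_0^2\geqslant -K_-$ and $\mu_0(0)\geqslant -\kappa_-$, and apply the Riccati comparison (Proposition \ref{riccacomp}) to get $a(s)\leqslant \mu_0(s)\leqslant \sqrt{|K_-|}+|\kappa_-|=B/(n-1)$ on $[0,\h)$. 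This bound is legitimate on the whole interval of interest because $\h\leqslant\overline{h}\leqslant m_a$ by the remark following Lemma \ref{lem11022020a}. Consequently $-\rho_1\leqslant(h-s)\,B/(n-1)\leqslant h\,B/(n-1)$, and combining with the previous step I obtain the pointwise estimate
\begin{equation*}
|\grad u|^2\div(\grad \eta)-2\hess \eta(\grad u,\grad u)\leqslant \Bigl(h\tfrac{B}{n-1}+(n-1)\Bigr)|\grad u|^2.
\end{equation*}

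Finally I would integrate this over $M_h$ and bound $\int_{M_h}|\grad u|^2\,\mathrm{d}_M\leqslant \int_M|\grad u|^2\,\mathrm{d}_M$, which is exactly the claimed inequality. I do not expect a serious obstacle: the argument parallels the pattern of Lemmas \ref{lem2701}--\ref{Lem2701}, and every ingredient (pointwise estimate, upper bound on the $\rho_i$, Riccati supersolution $\mu_0$) is already available in the text. The only delicate point is ensuring that the Riccati comparison bound $a\leqslant \mu_0$ is valid on the entire interval $[0,h)$, which is precisely where the choice of $\h$ (defined in \eqref{eq160202020b}) and the associated non-degeneracy statement of Lemma \ref{lem11022020a} come into play.
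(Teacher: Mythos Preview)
Your proposal is correct and follows essentially the same route as the paper: start from the pointwise inequality of Lemma~\ref{lm19022020a}, use Lemma~\ref{lem11022020a} to bound $\sum_{i=2}^{n-1}\rho_i\leqslant n-2$, control $-\rho_1$ via the principal-curvature comparison $\kappa_1\geqslant -a$ together with the Riccati supersolution $\mu_0$ to get $a\leqslant \sqrt{|K_-|}+|\kappa_-|=B/(n-1)$, and then integrate. The only cosmetic difference is that the paper records the intermediate bound $\bigl(1+(n-2)+(h-s)a(h-s)\bigr)|\grad u|^2$ before invoking $\mu_0$, whereas you go directly to the uniform bound on $a$.
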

\begin{proof}
From equations \eqref{onyep12}, \eqref{eq12022020} and Lemma \ref{lem11022020a}, for every $s\in [0, h)$, we have
\begin{align}
 |\grad u|^2\div(\grad \eta)&-2 \hess \eta(\grad u,\grad u)\nonumber\\
 &\leqslant \left(1+\sum_{i=2}^{n-1}\rho_i-\rho_{1} \right)|\grad u|^2\nonumber\\
 &\leqslant \Big(1+(n-2)+(h-s)a(h-s) \Big)|\grad u|^2.\label{eq13022020}
\end{align}
 
We set $\tilde{K}=-|K_-|$ and $\mu_0(t):=-\tilde{K}\frac{sn_{\tilde{K}}}{cs_{\tilde{K}}}(t)+|\kappa_-|$. We notice that $\mu_0$ satisfies:
$$\mu_0'=-\tilde{K}\frac{1}{cs_{\tilde{K}}^2},$$
$$\mu_0^2=\tilde{K}^2\frac{sn_{\tilde{K}}^2}{cs_{\tilde{K}}^2}-2\tilde{K}|\kappa_-|\frac{sn_{\tilde{K}}}{cs_{\tilde{K}}}+|\kappa_-|^2.$$
Then 
\begin{align*}
\mu_0'+\mu_0^2&=-\tilde{K}\left(1+2|\kappa_-|\frac{sn_{\tilde{K}}}{cs_{\tilde{K}}}\right)+|\kappa_-|^2\\
&\geqslant -\tilde{K}+|\kappa_-|^2\geqslant -\tilde{K}\geqslant-K_-,
\end{align*}
since 
$$ 0\leqslant-\tilde{K}\frac{sn_{\tilde{K}}(t)}{cs_{\tilde{K}}(t)}=
\begin{cases}
\sqrt{-\tilde{K}}\tanh(\sqrt{-\tilde{K}}t)\quad &\tilde{K}\neq 0,\\
0\quad &\tilde{K}= 0.
\end{cases}
$$
In addition, 
$$\mu_0(0)=|\kappa_-|\geqslant -\kappa_-.$$
Applying again Proposition \ref{riccacomp}, we get $a(t)\leqslant\mu_0(t)$. Hence we have $a(t)\leqslant \sqrt{|K_-|}+|\kappa_-| $ since $\tanh(x)\leqslant 1$ for every $x\in\R$. Replacing in 
inequality \eqref{eq13022020}, we get 
\begin{align*}
|\grad u|^2\div(\grad \eta)&-2 \hess \eta(\grad u,\grad u) \\
 &\leqslant \left[(n-1)+ (h-s)\left(\sqrt{|K_-|}+|\kappa_-|\right)\right]|\grad u|^2\\
 &\leqslant \left[(n-1)+ h\frac{B}{n-1}\right]|\grad u|^2.
\end{align*}
 
\end{proof}
\begin{proof}[Proof of Theorem \ref{mainth2}]
 We set $\overline{A}:=2\left(\frac{B}{n-1}+\frac{n-1}{\h}\right)$, then applying Proposition \ref{PohWentzel} and Lemma \ref{lm20022020b} with $h=\frac{\h}{2}$, one has
\begin{align*}
R_\beta(u)&\leqslant (1+\beta\overline{A})\int_{M}{ |\grad u|^2\mathrm{d}_M}+ \beta \int_\Gamma  (\partial_\n u)^2 \mathrm{d}_\Gamma\\
&\leqslant (1+\beta\overline{A})\left[\int_\Gamma  (\partial_\n u)^2 \mathrm{d}_\Gamma\right]^{\frac{1}{2}}+ \beta \int_\Gamma  (\partial_\n u)^2 \mathrm{d}_\Gamma
\end{align*}
Let $(\Psi_i)_{i\in\N}$ be a complete set of eigenfunctions corresponding to the Steklov eigenvalues $\lambda^S_i$ of $M$ forming an orthonormal basis of $\mathfrak{W}_0$. Let $\spn\{\Psi_i, i=1,\ldots,k\}$ be the trial space $V$. Every $u\in V$ such that $\int_\Gamma  u^2 \mathrm{d}_\Gamma=1$ can be written  as $u=\sum_{i=1}^k c_i\Psi_i$ with $\sum_{i=1}^k c_i^2=1$.
\begin{align*}
 \lambda_{W,k}^{\beta}&\leqslant (1+\beta\overline{A})\left[\int_\Gamma  (\partial_\n u)^2 \mathrm{d}_\Gamma\right]^{\frac{1}{2}}+ \beta \int_\Gamma  (\partial_\n u)^2 \mathrm{d}_\Gamma\\
 &\leqslant (1+\beta\overline{A})\left[\int_\Gamma  (\sum_{i=1}^k c_i\partial_\n\Psi_i )^2 \mathrm{d}_\Gamma\right]^{\frac{1}{2}}+ \beta \int_\Gamma  (\sum_{i=1}^k c_i\partial_\n\Psi_i)^2 \mathrm{d}_\Gamma\\
  &\leqslant (1+\beta\overline{A})\lambda^S_k\left[\int_\Gamma  u^2 \mathrm{d}_\Gamma\right]^{\frac{1}{2}}+ (\lambda^S_k)^2\beta \int_\Gamma u^2 \mathrm{d}_\Gamma\\
    &= (1+\beta\overline{A})\lambda^S_k+ \beta(\lambda^S_k)^2.
\end{align*}
\qedhere
\end{proof}
\section{Estimates based on Ricci curvature with Reilly identity}
Let $(M, g)$ be an $n$-dimensional compact connected Riemannian manifold with non-empty boundary $\Gamma$. We suppose that the Ricci curvature of $M$ and the principle curvatures of $\Gamma$ are  bounded from below. Then we get a quantitative comparison between the Wentzel-eigenvalues and the eigenvalues of the Laplace-Beltrami operator in $\Gamma$. We use the following Reilly's formula which has many interesting applications.
%

\begin{thm}[Reilly, 1977]
Given a smooth function $f$ on $M$, we denote $z = f|_\Gamma$ and $v = \partial_\n f$. Then,

\begin{equation}
\label{Reilly}
 \int_M ( \Delta f)^2-|\hess f|^2- \int_M \mathrm{Ric}( \nabla f,  \nabla f)=\int_\Gamma H v^2- 2v \Delta_\Gamma z +\Pi(\nabla_{\Gamma} z,\nabla_{\Gamma} z).
\end{equation}
\end{thm}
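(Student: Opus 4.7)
The plan is to derive Reilly's identity from Bochner's formula by integration by parts, together with the boundary decomposition of $\grad f$ and $\hess f$.

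I would first apply Bochner's identity pointwise,
\[
\tfrac{1}{2}\Delta|\grad f|^2 \;=\; |\hess f|^2 \;+\; g(\grad f,\grad \Delta f) \;+\; \ric(\grad f,\grad f)
\]
(with the appropriate sign for $\Delta$), and integrate it over $M$. The left-hand side becomes a boundary integral $\tfrac{1}{2}\int_\Gamma \partial_\n|\grad f|^2$ by Green's identity, while the middle term on the right is integrated by parts once more to produce $\int_\Gamma v\,\Delta f - \int_M(\Delta f)^2$. Rearrangement yields
\[
\int_M\!\bigl[(\Delta f)^2-|\hess f|^2\bigr] \;-\; \int_M \ric(\grad f,\grad f) \;=\; \int_\Gamma v\,\Delta f \;-\; \tfrac{1}{2}\!\int_\Gamma \partial_\n|\grad f|^2,
\]
so the task reduces to identifying the right-hand side with the boundary expression in Reilly's formula.

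For the boundary analysis I decompose $\grad f|_\Gamma = \grad_\Gamma z + v\n$. In Fermi coordinates near $\Gamma$ the Laplacian splits as $\Delta f = \Delta_\Gamma z + \hess f(\n,\n) - (n-1)Hv$ (with signs dictated by the paper's convention $\Delta=-\div\grad$). The crucial geometric identity, obtained by differentiating $v = g(\grad f,\n)$ along a tangent vector $X$ and using $g(\grad_X\n,Y)=\Pi(X,Y)$, is
\[
\hess f(X,\n) \;=\; X(v) - \Pi(X,\grad_\Gamma z),
\]
which gives $\tfrac{1}{2}\partial_\n|\grad f|^2 = \hess f(\grad f,\n) = g(\grad_\Gamma z,\grad_\Gamma v) - \Pi(\grad_\Gamma z,\grad_\Gamma z) + v\,\hess f(\n,\n)$. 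Integrating over $\Gamma$ and integrating the tangential term by parts on the closed manifold $\Gamma$ converts $\int_\Gamma g(\grad_\Gamma z,\grad_\Gamma v)$ into $\int_\Gamma v\,\Delta_\Gamma z$. When both boundary pieces are assembled, the two $\hess f(\n,\n)$ contributions cancel and what survives is exactly
\[
\int_\Gamma Hv^2 \;-\; 2v\,\Delta_\Gamma z \;+\; \Pi(\grad_\Gamma z,\grad_\Gamma z),
\]
which is the desired right-hand side.

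The main obstacle is sign and convention bookkeeping: $\Delta=-\div\grad$, $\n$ is outward, $\Delta_\Gamma$ is defined with the same sign as $\Delta$, and $\Pi$ and $H$ are oriented so that round spheres have positive principal curvatures. Once the conventions are pinned down, the proof reduces to the transparent sequence Bochner $\to$ two integrations by parts in $M$ $\to$ normal/tangential decomposition at $\Gamma$ $\to$ one tangential integration by parts on $\Gamma$, and the only genuinely geometric input is the identity $\hess f(X,\n) = X(v) - \Pi(X,\grad_\Gamma z)$, which is precisely how the second fundamental form enters the final formula.
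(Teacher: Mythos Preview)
The paper does not actually prove this theorem: immediately after stating it, the author writes ``We refer the reader to \cite[(14)]{ReillyRobert} for further details'' and then uses the identity as a black box in the proof of Theorem~\ref{AvecReilly}. Your sketch is the standard derivation---integrate Bochner over $M$, push two terms to the boundary via the divergence theorem, decompose $\grad f|_\Gamma$ into tangential and normal parts, and use the identity $\hess f(X,\n)=X(v)-\Pi(X,\grad_\Gamma z)$ followed by a tangential integration by parts on $\Gamma$---and it is correct. This is essentially Reilly's original argument, so there is nothing to compare against in the paper itself; you have supplied what the paper chose to outsource to the reference.
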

We refer the reader to \cite[(14)]{ReillyRobert} for further details.
%
\begin{proof}[Proof of Theorem \ref{AvecReilly}]
As is well known, $L^2(\Gamma)$ has an orthonormal basis made of eigenfunctions of $\Delta_\Gamma$ that we will denote by $\{\varphi_i\}_{k=0}^\infty$, such that, $\varphi_k$ is associated to $\eta_k$, that is $\Delta_\Gamma \varphi_k=\eta_k\varphi_k$, for all $k\geqslant 0$.\\
Let $k$ be fixed, consider the harmonic extensions $\phi_j \stackrel{\scriptscriptstyle\text{def}}= \wedge \varphi_j\in H^1(M)$:

\begin{equation}\label{D2}
\begin{cases}
\Delta \phi_j=0\quad\text{ in } \Omega,\\
\phi_j |_\Gamma= \varphi_j \quad \text{ on } \Gamma,
\end{cases}
\end{equation}
for $j = 1,\ldots,k$. Let  $V  \stackrel{\scriptscriptstyle\text{def}}= \spn\{\phi_0,\phi_1,\ldots,\phi_k\}$ be the space generated by $\{\phi_j\}_{j=0}^k$. 
We have $\varphi_j\in H^1(\Gamma)$, for all $j=0,\ldots,k$, since $\int_\Gamma \varphi_j^2=1$ and $\int_\Gamma |\nabla\varphi|^2=\eta_j$.
Thus, for any $\varphi_i$, we have a unique $\phi_i\in C^\infty(\overline{M})$ solving \eqref{D}, assuming each connected component of $M$ has non-empty boundary. We refer the interested reader to \cite[Sec~5]{Taylor}  for further information. Moreover, $\phi_i\in H^1(M)$ since $\varphi_i\in H^1(\Gamma)$. See \cite[p. 360, (1.39) and (1.40)]{Taylor}. Then accordingly, $\phi_i\in \mathfrak{W}$, for $i=0,\ldots,k$
and $V$ is a $k$-dimensional subspace of $\mathfrak{W}_\beta$.
Every function $\phi$ in  $V$  can be expressed as $\phi = \Sigma_{j=1}^k \alpha_j \phi_j$, then $(\phi,\phi)= \Sigma_{i=0}^k\alpha_i^2(\phi_i, \phi_i)$ and 
$(\mathrm{d}\phi,\mathrm{d} \phi) =\Sigma_{i=0}^k\alpha_i^2(\mathrm{d}\phi_i, \mathrm{d}\phi_i)$. 
Assume that $u\in\{ \phi_0,\ldots,\phi_k\} $ realises $R_\beta(u)=\underset{0\leqslant i \leqslant k} {\max}\int_\Om{|\nabla \phi_i|^2 \mathrm{d}_M}$ and
let $m= R_\beta(u)$, then
\begin{align*}
R_\beta(\phi)&=\frac{\Sigma_{i=0}^k\alpha_i^2\left( \int_M{|\nabla \phi_i|^2 \mathrm{d}_M}+\beta\int_\Gamma{|\nabla_\Gamma \phi_i|^2 \mathrm{d}_\Gamma} \right)}{\Sigma_{i=0}^k\alpha_i^2 \int_{\Gamma}{\phi_i^2 \mathrm{d}_\Gamma}}\\
& \leqslant \frac{\Sigma_{i=0}^k\alpha_i^2\left(m \int_{\Gamma}{\phi_i^2 \mathrm{d}_\Gamma}\right)}{\Sigma_{i=0}^k\alpha_i^2 \int_{\Gamma}{\phi_i^2 \mathrm{d}_\Gamma} }=m,
\end{align*}
meaning that $\{R_\beta(\phi);~ \phi\in V\}$ is bounded from above by $R_\beta(u)$.
Using the min-max principle \eqref{char} together with \eqref{Reilly} leads to
\begin{align*}
&\lambda_{W,k}^{\beta} \leqslant \underset {0\leqslant i \leqslant k} {\max} R_\beta(\phi_i)\leqslant \int_\Om{|\nabla u|^2 \mathrm{d}_M}+\beta\eta_k\\
& \leqslant\frac{1}{2(n-1)\kappa_-}\left[\left(K_-+2\eta_k \right)+\sqrt{(K_-+2\eta_k)^2-4\kappa_-(n-1)\eta_k} \right]+\beta\eta_k.
\end{align*}

Indeed, applying \eqref{rayleigh} to $u$ one has 
\begin{align*} 
-\int_M|\hess u|^2 & +Ric(\nabla u,\nabla u) \mathrm{d}_M\\
& = \int_\Gamma \left[H(\partial_\n u)^2-2|\nabla_\Gamma u|^2 \partial_\n u\right]\mathrm{d}_\Gamma+\Pi(\nabla_\Gamma u, \nabla_\Gamma u) \mathrm{d}_\Gamma.
\end{align*}
Since, by Cauchy-Schwarz inequality $(\Delta u)^2\leqslant n|\hess u|^2$, it follows that
$$-\frac{1}{n}  \int_M ( \Delta u)^2\mathrm{d}_M+ K_- \int_M |\nabla u|^2\mathrm{d}_M\geqslant\int_\Gamma H (\partial_\n u)^2- 2(\partial_\n u) \Delta_\Gamma u +\Pi(\nabla_{\Gamma} u,\nabla_{\Gamma} u)\mathrm{d}_\Gamma.
$$
Then,
\begin{align}\nonumber
K_-\int_M |\nabla u|^2\mathrm{d}_M & \geqslant\int_\Gamma H(\partial_\n u)\partial_\n u \mathrm{d}_\Gamma-2\eta_k\int_M |\nabla u|^2\mathrm{d}_M\nonumber\\
 &~~+ \int_\Gamma \Pi(\nabla_\Gamma u,\nabla_\Gamma u)\mathrm{d}_\Gamma\nonumber\\
 & \geqslant(n-1)\kappa_- \int_\Gamma (\partial_\n u)^2\mathrm{d}_\Gamma-2\eta_k\int_M |\nabla u|^2\mathrm{d}_M\nonumber\\
 &~~+\kappa_-\eta_k.\label{papillon1} 
\end{align}
Since $\int_M |\nabla u|^2 \mathrm{d}_M\leqslant\left(\int_\Gamma (\partial_\n u)^2 \mathrm{d}_\Gamma\right)^\frac{1}{2}$, \eqref{papillon1} can be transformed in two ways:
\begin{align}
\label{eqaigle1}
(n-1)\kappa_- \left(\int_M |\nabla u|^2 \mathrm{d}_M\right)^2&-\left(2\eta_k+K_-\right)\int_M |\nabla u|^2\mathrm{d}_M\nonumber\\
&+\eta_k\kappa_-\leqslant 0 
\end{align}
and
\begin{align}\label{eqaigle2}
K_-\left(\int_\Gamma (\partial_\n u)^2\mathrm{d}_\Gamma\right)^\frac{1}{2}  \geqslant &
(n-1)\kappa_- \int_\Gamma (\partial_\n u)^2\mathrm{d}_\Gamma-2\eta_k\left(\int_\Gamma (\partial_\n u)^2\mathrm{d}_\Gamma\right)^\frac{1}{2}\\
 &+\kappa_-\eta_k.\nonumber
\end{align}
 From \eqref{eqaigle2}, we have
\begin{align*}
&(n-1)\kappa_-\left[\int_\Gamma \left( \partial_\n u-\frac{K_-+2\eta_k}{2(n-1)\kappa_-}u\right)^2-\left[\frac{K_-+2}{2(n-1)\kappa_-} \right]^2u^2\mathrm{d}_\Gamma\right]\\
&~~+\int_\Gamma |\nabla_\Gamma u|^2\mathrm{d}_\Gamma\kappa_-  \leqslant 0.
\end{align*}
So, by a simple remarkable identity we get
\begin{align*}
 &(n-1)\kappa_- \int_\Gamma \left( \partial_\n u-\frac{K_-+2\eta_k}{2(n-1)\kappa_-}u\right)^2\mathrm{d}_\Gamma\\
 &~~-\kappa_-\left[\frac{(K_-+2\eta_k)^2}{4(n-1)\kappa_-^2}-\int_\Gamma |\nabla_\Gamma u|^2\mathrm{d}_\Gamma \right]  \leqslant 0.
\end{align*}
Ignoring all obvious non-negative terms in the last inequality, we have that   
$$\left[\frac{(K_-+2\eta_k)^2}{4(n-1)\kappa_-^2}-\eta_k \right]  \geqslant 0,$$ then
$$\mathit{Q}:=\left(K_-+2\eta_k\right)^2- 4(n-1) \kappa_-\eta_k\geqslant 0.$$
So solving the inequality \eqref{eqaigle1} for the unknown
$\int_M{|\nabla u|^2 \mathrm{d}_M}$, leads to 
\begin{multline}
\frac{1}{2(n-1)\kappa_-}\left[\left(K_-+2\eta_k \right)-\sqrt{\mathit{Q}} \right]\\
\leqslant\int_M|\nabla u|^2\mathrm{d}_M\\
\leqslant\frac{1}{2(n-1)\kappa_-}\left[\left(K_-+2\eta_k \right)+\sqrt{\mathit{Q}} \right].
\end{multline}

\end{proof}
\bibliographystyle{plain}
\providecommand{\bysame}{\leavevmode\hbox to3em{\hrulefill}\thinspace}
\providecommand{\noopsort}[1]{}
\providecommand{\mr}[1]{\href{http://www.ams.org/mathscinet-getitem?mr=#1}{MR~#1}}
\providecommand{\zbl}[1]{\href{http://www.zentralblatt-math.org/zmath/en/search/?q=an:#1}{Zbl~#1}}
\providecommand{\jfm}[1]{\href{http://www.emis.de/cgi-bin/JFM-item?#1}{JFM~#1}}
\providecommand{\arxiv}[1]{\href{http://www.arxiv.org/abs/#1}{arXiv~#1}}
\providecommand{\doi}[1]{\url{http://dx.doi.org/#1}}
\providecommand{\MR}{\relax\ifhmode\unskip\space\fi MR }
\providecommand{\MRhref}[2]{%
  \href{http://www.ams.org/mathscinet-getitem?mr=#1}{#2}
}
\providecommand{\href}[2]{#2}

\end{document}